\newtheorem{theorem}{Theorem}
\newtheorem{lemma}[theorem]{Lemma}
\newtheorem{hyp}{Assumption}
\newtheorem{proposition}{Proposition}
\newtheorem{definition}{Definition}
\theoremstyle{remark}
\newtheorem{example}{Example}
\newcommand{\X}{{\cal X}}
\newcommand{\Y}{{\cal Y}}
\def\S{{\cal S}}
\newcommand{\V}{{\cal V}}
\newcommand{\Z}{{\cal Z}}
\newcommand{\cH}{{\cal H}}
\newcommand{\argmax}{\mathop{\rm arg\, max}}
\newcommand{\RR}{\mathbb{R}}
\newcommand{\NN}{\mathbb{N}}
\newcommand{\EE}{\mathbb{E}}
\newcommand{\roc}{\rm ROC}
\newcommand{\auc}{\rm AUC}
\newcommand{\bX}{\mathbf{X}}
\newcommand{\bx}{\mathbf{x}}
\newcommand{\by}{\mathbf{y}}
\newcommand{\bY}{\mathbf{Y}}
\newcommand{\bZ}{\mathbf{Z}}
\newcommand{\bF}{F}
\newcommand{\dd}{\mathrm{d} }
\newcommand{\iid}{\textit{i.i.d.}}
\newcommand{\cdf}{\textit{c.d.f.}\;}
\newcommand{\rv}{\textit{r.v.}}
\newcommand{\ie}{\textit{i.e.}}
\newcommand{\wrt}{\textit{w.r.t.}}
\newcommand{\st}{\textit{s.t.}\;}
\newcommand{\eg}{\textit{e.g.}}
\newcommand{\resp}{\textit{resp.}}
\newcommand{\IFF}{\textit{iff.}\;}
\newcommand{\vc}{{\sc VC}}
\title{On Ranking-based Tests of Independence}
\date{}
\author[1]{Myrto Limnios\thanks{Corresponding author.} }
\author[2]{Stephan Cl\'emen\c{c}on}
\affil[1]{\small  \url{myli@math.ku.dk}\\
Department of Mathematical Sciences\\
University of Copenhagen, Universitetsparken 5, Copenhagen, 
 2100, Denmark.}
\affil[2]{ \small \url{stephan.clemencon@telecom-paris.fr}\\
	Telecom Paris, LTCI, Institut Polytechnique de Paris\\
	19 place Marguerite Perey, Palaiseau, 91120, France.}
\begin{document}

\maketitle

\begin{abstract}
    In this paper we develop a novel nonparametric framework to test the independence of two random variables $\bX$ and $\bY$ with unknown respective marginals $H(\dd x)$ and $G(\dd y)$ and joint distribution $F(\dd x\dd y)$, based on {\it Receiver Operating Characteristic} ($\roc$) analysis and bipartite ranking. The rationale behind our approach relies on the fact that, the independence hypothesis $\mathcal{H}_0$ is necessarily false as soon as the optimal scoring function related to the pair of distributions $(H\otimes G,\; F)$, obtained from a bipartite ranking algorithm, has a $\roc$ curve that deviates from the main diagonal of the unit square.
    We consider a wide class of rank statistics encompassing many ways of deviating from the diagonal in the $\roc$ space to build tests of independence. Beyond its great flexibility, this new method has theoretical properties that far surpass those of its competitors. Nonasymptotic bounds for the two types of testing errors are established. From an empirical perspective, the novel procedure we promote in this paper exhibits a remarkable ability to detect small departures, of various types, from the null assumption $\cH_0$, even in high dimension, as supported by the numerical experiments presented here. 
\end{abstract}

\section{INTRODUCTION}\label{sec:introduction}
Let $(\mathbf{X}_1,\; \mathbf{Y}_1),\; \ldots,\; (\mathbf{X}_N,\; \mathbf{Y}_N)$ be $N\geq 1$ independent and identically distributed ($\iid$) random pairs, defined on a space $(\Omega,\; \mathcal{F},\; \mathbb{P})$ and valued in the product space $\mathcal{X}\times \mathcal{Y}$, copies of the generic random pair $(\mathbf{X},\; \mathbf{Y})$. An important problem, occurring in many applications, consists in testing the independence of the two $\rv$'s $\mathbf{X}$ and $\mathbf{Y}$ based on the observation of the $(\mathbf{X}_i,\; \mathbf{Y}_i)$'s. It is considered here from a nonparametric perspective, meaning that no assumptions are made about the distribution $F(\dd x\dd y)$ of the pair $(\mathbf{X},\; \mathbf{Y})$, nor about the marginal distributions $H(\dd x)$ and $G(\dd y)$ of $\mathbf{X}$ and $ \mathbf{Y}$. The goal is to test the \textit{composite hypothesis}: 
\begin{equation} \label{eq:problem}
\mathcal{H}_0:\;\; F=H\otimes G\;\;  \text{ versus }\;\;  \mathcal{H}_1:\;\; F\neq H\otimes G~.
\end{equation}
The problem thus consists in testing whether two probability distributions on the product space $\mathcal{X}\times \mathcal{Y}$ are equal or not. Under additional (parametric) assumptions on the distribution $F$ (\textit{e.g.} discreteness, Gaussianity), various measures of dependence can be classically used to build pivotal test statistics (\textit{e.g.} chi-square statistic, empirical linear correlation). In the nonparametric case, most techniques consists in computing a statistical version of a (pseudo-) distance between $F$ and $H\otimes G$ (\textit{e.g.} integral probability metrics, see \cite{Rachevbook}). Refer to $\eg$ \cite{Szekely07,SzRi13} for covariance-based distances, generalized to metric spaces in \cite{Lyons13,jakobsen2017distance}.  \cite{GrettonBousquet05,GrettonSmola05,GBRSS07} introduced  kernel-based extensions relying on the \textit{Hilbert-Schmidt Independence Criterion} (HSIC), where the covariance distance being shown to be a specific instance of the class of HSIC-type measures of dependence in \cite{SejGrett13}. Other measures for testing independence have been recently proposed, see in particular, \cite{BerrSam19,Gonzalez2021DataDrivenRF} using the notion of mutual information, \cite{GrettonGyo2010,Heller16jmlr} based on partitioning techniques, and \cite{Reshef11,Reshefjmlr16,Reshef18} considering use of the maximal information criterion.

\paragraph{Rank statistics for testing independence.} The approach developed here, of completely different nature, is inspired by \textit{rank-based} methods (\cite{HajSid67} or \cite{Kallenberg}, \cite{KallenbergLedwina} or \cite{KallenbergLedvina2}) tailored to the situations where $\mathcal{X}= \mathcal{Y}=\mathbb{R}$ and $\mathcal{H}_0$ is tested against specific alternatives of \textit{positive (regression) dependence}\footnote{Two real-valued $\rv$'s $X$ and $Y$ defined on the same space exhibit \textit{positive dependence} \textit{iff} $\mathbb{P}\left(X>x,\; Y>y\right)\geq \mathbb{P}\left(X>x\right)\times \mathbb{P}\left(Y>y\right)$ for any $(x,y)\in \mathbb{R}^2$.}. Assuming in addition that $\bX$ and $\bY$ are continuous $\rv$'s, a natural strategy (see \cite{Kendall75}) consists in ranking the pairs $(\bX_i,\; \bY_i)$ according to increasing values of the $\bX_i$'s: $(\bX_{\sigma(1)},\; \bY_{\sigma(1)}),\; \ldots,\; (\bX_{\sigma(N)},\; \bY_{\sigma(N)})$, where $\sigma$ is the permutation of the index set $\{1,\; \ldots,\; N\}$ (\textit{i.e.} the element of the symmetric group $\mathfrak{S}_N$) $\st$ $\bf{X}_{\sigma(1)}<\ldots< \bf{X}_{\sigma(N)}$ and analyzing the ranks of the $\bY_{\sigma(i)}$'s through the rank correlation coefficient, see \textit{e.g.} Chapter 6 in \cite{LehmannRomano}: conditioned upon $(\bX_{\sigma(1)},\; \ldots,\; \bX_{\sigma(N)})$, the latter being uniformly random under $\mathcal{H}_0$, while the rank of $\bY_{\sigma(i)}$ among the $\bY_{\sigma(j)}$'s exhibits an `upward trend' under the positive dependence alternative (\textit{i.e.} it is stochastically increasing with $i$).
The approach to independence testing based on statistical learning we propose 
shares similarities with such rank-based techniques, it also consists ranking pairs in $\mathcal{X}\times \mathcal{Y}$. Extension of rank-based techniques for independence testing to multivariate data has been recently the subject of much attention in the literature. The sole approach enjoying distribution-freeness under nonparametric assumptions so far, is based on the notion of center-outward ranks/signs in \cite{Hallin17centerOut}. It is used in \cite{HalDrton22} to build \textit{generalized symmetric (test) statistics}: it boils down to plugging into classic statistics, e.g. the distance covariance measure for independence testing, a center-outward generalization of rank statistics by mapping any absolute continuous distribution to the spherical uniform distribution on the $d$-dimensional unit ball, solution of the related optimal transport formulation, see \cite{Hall21}. This encompasses the main modern rank-based and distance-based methods for testing the hypothesis of independence, see  $\eg$ \cite{DebSen19,LDrton18}. 
While these methods have appealing theoretical properties, see \cite{HalDrton22}, they are limited by the strong negative impact of $d$ of the feature on their power, studied for kernel and distance based techniques,
see section 3 in \cite{RamdasAAAI15}. As shown in \cite{HXDGustat23}, this is caused by the dependence of the kernel of the $U$-statistic of degree two $\wrt$ the dimension $d$.

\paragraph{Our contributions.} The nature of our approach is quite different. It involves a preliminary statistical learning step, namely bipartite ranking on $\mathcal{X}\times \mathcal{Y}$, and relies on {\it Receiver Operating Characteristic} ($\roc$) analysis. The $\roc$ curve is the gold standard to differentiate between two univariate distributions. The rationale behind our methodology lies in the fact that, under the null hypothesis $\mathcal{H}_0$,  the {\it optimal} $\roc$ curve related to the bipartite ranking defined by the pair $(H\otimes G,\; F)$ of distributions on $\mathcal{X}\times \mathcal{Y}$, is known and coincides with the main diagonal of $[0,1]^2$. It is thus natural to quantify the departure from $\mathcal{H}_0$, by the deviations of the optimal $\roc$ curve from the diagonal.
The latter can be summarized by appropriate \textit{two-sample (linear) rank statistics}, whose concentration properties have been 
investigated in \cite{CleLimVay21}. Since the optimal $\roc$ curve is unknown in practice, a bipartite ranking task on the product space $\mathcal{X}\times \mathcal{Y}$ must be completed first 
to rank the pairs.
Our method is implemented in three steps: after splitting the sample in two parts ($2$-split trick) and shuffling the pairs, a first part of the sample is used to train a bipartite ranking function to output a scoring function. The second part is then ranked using the scoring function previously learned so as to compute a test statistic assessing the possible departure from independence.
It may be applied in a general multivariate framework and has considerable advantages in the high-dimension case, compared to all its competitors, especially those based on probability metrics between statistical versions of $F$ and $H\otimes G$, see \textit{e.g.} \cite{GrettonGyo2010}. 
In contrast, provided that the model bias (\textit{i.e.}, the error inherent in the choice of the set of ranking functions over which the learning step is performed) is `small', the power of the test proposed is possibly affected by the dimension only through the choice of 
bipartite ranking algorithm. This is supported here by a sound (nonasymptotic) theoretical analysis based on the concentration results for two-sample $R$-processes proved in \cite{CleLimVay21} and promising empirical results. Our method is shown to work well, in the vicinity of independence especially, surpassing the existing methods. 

\paragraph{Connection to the two-sample problem.} We point out that the use of (an estimate of) the optimal $\roc$ curve, on which the novel independence testing method promoted here relies, has been recently exploited for the purpose of statistical hypothesis testing in \cite{CleLimVay21test} to solve the two-sample problem, \textit{i.e.} to test the assumption that two $\iid$ samples share the same distribution. The major difference naturally lies in the nature of the alternatives to the null assumption, \textit{i.e.} departure from independence \textit{vs.} departure from homogeneity, but also in the statistical framework/analysis: whereas independent observations drawn from each of the two distributions to be tested equal are supposedly available in the two-sample problem, no sample of the distribution $H\otimes G$ is directly available under $\mathcal{H}_1$ in independence testing. A \textit{shuffling} procedure (\textit{i.e.} a random permutation of parts of the indices $\{1,\; \ldots,\; N\}$), that aims at building independent observations drawn from $H\otimes G$, is key in the testing method we propose  and analyze here. To summarize, for the method proposed here, new to the literature, we show that:
1) the test statistic is  distribution-free resulting in the exact computation of the testing threshold, 
2) a nearly optimal control of the type-II error with explicit parameters can be obtained for all types of alternative, 
3) the method depends on the dimension of the underlying spaces only through the bipartite ranking algorithm, importantly avoiding any mispessification of the asymptotic distribution (\cite{HXDGustat23}) and harmfull high-dimensional setting.
The article is organized as follows. Section \ref{sec:background} recalls key notions pertaining to $\roc$ analysis and bipartite ranking, providing an insight into the rationale of the method. It is described and theoretically analyzed from a nonparametric and nonasymptotic perspective in section \ref{sec:main}. Numerical results are displayed in section \ref{sec:exp}, while concluding remarks are collected in section \ref{sec:conclusion}. Due to space constraints, some properties related to $\roc$ analysis and bipartite ranking, all technical details and proofs, as well as additional numerical experiments, are postponed to the Supplementary Material.
\section{PRELIMINARIES}\label{sec:background}
We first briefly recall the main concepts related to {\rm ROC} analysis and bipartite ranking, involved in the methodology subsequently proposed and analyzed. The rationale behind the latter is next explained. Here and throughout, by $\mathbb{I}\{ \mathcal{E} \}$ is meant the indicator function of any event $\mathcal{E}$,
 by $\delta_a$ the Dirac mass at any point $a$, by $W^{-1}(u)=\inf\{t\in (-\infty,\;+\infty]:\; W(t)\geq u\}$, $u\in [0,1]$ the generalized inverse of any cumulative distribution function $W(t)$ on $\RR\cup\{+\infty\}$. 
The floor and ceiling functions are denoted by $u \in \mathbb{R}\mapsto \lfloor u \rfloor$ and by $u \in \mathbb{R}\mapsto \lceil u \rceil$ respectively. For any bounded function $\psi:(0,1)\rightarrow \mathbb{R}$, we also set $\vert\vert \psi\vert\vert_{\infty}=\sup_{u\in (0,1)}\vert \psi(u)\vert$. We consider $\rv$ denoted in bold symbols as valued in a multivariate space $\mathcal{Z}$, $\eg$ subset of $\RR^d$, with $d\geq 2$.

\subsection{Bipartite Ranking and {\rm ROC} Analysis}\label{ssec:BPROC}

We explain the connection between bipartite ranking and the quantification of the discrepancy between two probability distributions on a same space.
\paragraph{$\roc$ analysis.} The $\roc$ curve is a gold standard to measure the difference between two \textit{univariate} distributions, $F_1$ and $F_2$ say. It is defined by the Probability-Probability plot  
$t\in\mathbb{R}\mapsto (1 - F_1(t),\; 1 - F_2(t))$,  connecting possible jumps by line segments by convention. It can alternatively 
be seen as the graph of a c\`ad-l\`ag (\textit{i.e.} right-continuous and left-limited) non-decreasing mapping defined by $u \in (0,1) \mapsto  \roc_{F_1,F_2}(u):=1-F_2\circ F_1^{-1}(1-u)$ 
at points $\alpha$ such that $F_2\circ F_1^{-1}(1-u)=1-u$. 
The curve $\roc_{F_1,F_2}$ 
coincides with the main diagonal of $[0,1]^2$ \textit{iff} $F_1=F_2$. Hence, the notion of $\roc$ curve offers a visual tool to examine the differences between two univariate distributions. For instance, the univariate distribution $F_2$ is stochastically larger\footnote{Recall that $F_2$ is said to be stochastically larger than $F_1$ \textit{iff} $F_1(t) \geq F_2(t)$  
for all $t\in \mathbb{R}$.} than $F_1$ \textit{iff} the curve $\roc_{F_1,F_2} \; $  is everywhere above the main diagonal. Of course, the curve $\roc_{F_1,F_2}$
 is unknown in practice, just like the $F_i$'s. 
Hence, $\roc$ analysis must be based on independent \textit{i.i.d.} samples $(X_{1,1},\; \ldots,\; X_{1,n_1})$ and $(X_{2,1},\; \ldots,\; X_{2,n_2})$ with distributions $F_1$ and $F_2$ respectively and consists in plotting $\roc_{\hat{F}_1,\hat{F}_2}$, 
 where $\hat{F}_i=(1/n_i)\sum_{k\leq n_i}\delta_{X_{i,k}}$ is the corresponding empirical counterpart of $F_i$ with $i\in\{1,\; 2\}$.
A popular scalar summary 
 is the Area Under the $\roc$ Curve ($\auc$), defined by $\auc(F_1,F_2)=\int_0^1\roc_{F_1, F_2}(u)du$. 
 Its empirical version can be expressed as an affine transform of a (two-sample linear) rank statistic, the Mann-Whitney Wilcoxon (MWW) statistic $\hat{W}_{n_1,n_2}=\sum_{k\leq n_2}R(X_{2,k})$, where the ranks $R(X_{2,k})=\sum_{l\leq n_1} \mathbb{I}\{X_{1,l} \leq X_{2,k}\} + \sum_{l\leq n_2} \mathbb{I}\{X_{2,l}\leq X_{2,k}\}$ denotes the rank of $X_{2,k}$ among the pooled sample: 
\begin{equation}\label{eq:auc}
n_1n_2\auc(\hat{F}_1,\hat{F}_2)=\hat{W}_{n_1,n_2}- \frac{n_2(n_2+1)}{2}~.
\end{equation}
It is thus a distribution-free statistic (concentrated around the value $1/2$) when $F_1=F_2$, that can be naturally used to test the hypothesis of equality in distribution based on the $X_{i,k}$'s with $i\in \{1,\; 2\}$.

\paragraph{Bipartite ranking.} Consider now two distributions $F_+$ and $F_-$ on a general measurable space $\mathcal{Z}$, referred to as positive and negative distributions. 
Let two independent $\iid$ samples $\bX_{+,1},\; \ldots,\; \bX_{+,n_+}$ and $\bX_{-,1},\; \ldots,\; \bX_{-,n_-}$ drawn from $F_+$ and $F_-$ respectively. The goal of bipartite ranking is to learn a scoring function $s:\mathcal{Z}\rightarrow (-\infty,\; \infty]$, based on the two samples, to rank any new observation without prior knowledge, by inducing a total preorder on $\mathcal{Z}$ statistically ranking the positive instances ($+$) at the top of the resulting list compared to the negative ones ($-$), $\ie$, $\forall (x,x') \in \mathcal{Z}^2$, $x\preccurlyeq_s x'$ \textit{iff} $s(x)\leq s(x')$. Let $\S$ be the set of all scoring functions on $\mathcal{Z}$. One evaluates the ranking performance of a candidate $s(z)$ in $\S$ by plotting (a statistical version of) the $\roc$ curve $\roc((F_{s,-},F_{s,+}),\; \alpha)=\roc(s,\alpha)$, denoting by $F_{s,\epsilon}$ 
the pushforward distribution of $F_{\epsilon}$ by the mapping $s(z)$ for $\epsilon\in\{-,\; +\}$. This  defines a partial preorder on $\mathcal{S}$: for all 
$(s_1, \; s_2)$, $s_2$ is more accurate than $s_1$ when $\roc(s_1,\cdot)\leq \roc(s_2,\cdot)$ on $[0,1]$. 
The most accurate scoring functions are increasing transforms of the likelihood ratio $\Psi(z)=\dd F_{s,+}/\dd F_{s,-}(z)$, 
 as can be deduced from a straightforward Neyman-Pearson argument (see \textit{e.g.} Proposition 4 in \cite{CV09ieee}): $\S^*=\left\{s\in \S, \forall (z,\; z' ) \in\mathcal{Z}^2, \Psi(z)<\Psi(z') \Rightarrow s^*(z)<s^*(z') \right\}$. 
For all $(s,\; u)\in \S\times (0,1)$, we have: $\roc(s,\; u)\leq \roc^*(u)$, where $\roc^*(\cdot)=\roc(\Psi,\; \cdot)=\roc(s^*,\; \cdot)$ for any $s^* \in \S^*$. The optimal curve is always concave, increasing, above the main diagonal of the $\roc$ space consequently, \textit{cf} \cite{CV09ieee}. A key to understanding the method in section \ref{sec:main} is to realize that $F_+=F_-$ \textit{iff} $\roc^*$ coincides with the diagonal of $[0,1]^2$, see subsection \ref{subsec:rationale}.
 
\paragraph{$\roc$ curve optimization.} From a quantitative perspective, bipartite ranking aims at building a scoring function $s(z)$, based on the $\bX_{\epsilon,k}$'s with a $\roc$ curve as close as possible to $\roc^*$. A typical way of measuring the deviation between these curves is to consider their distance in $\sup$ norm.
As $\roc^*$ is unknown, just like $\S^*$, no straightforward statistical counterpart of this loss can be computed. In \cite{CV09ieee} and \cite{CV10CA}, it is 
 proved that bipartite ranking can be viewed as nested cost-sensitive classification tasks. By 
   discretizing them adaptively, empirical risk minimization can be sequentially applied, with statistical guarantees in the $\sup$-norm sense at the cost of an approximation bias.
 Ranking performance can  be also measured by means of the $L_1$-norm in the $\roc$ space:
$\int_{0}^1\vert \roc(s,u)-\roc^*(u)  \vert \dd u =\auc^*-\auc(s)$, where $\auc(s)=\auc(F_{s,-},F_{s,+})$ and $\auc^*=\auc(\Psi)$.
 The minimization of the $L_1$-distance to $\roc^*$ is equivalent to the maximization of the (scalar) $\auc$ criterion. Maximizing the latter 
 over a class $\S_0\subset\S$, of controlled complexity, is a popular approach to bipartite ranking, and documented in various articles. Refer to \textit{e.g.} \cite{AGHHPR05} or \cite{CLV08} for upper confidence bounds for the $\auc$ deficit of scoring rules obtained by solving
$\max_{s\in \S_0} \; \auc(\hat{F}_{s,-},\hat{F}_{s,+})$,
where  $\widehat{F}_{s,\epsilon}=(1/n_{\epsilon})\sum_{j=1}^{n_{\epsilon}}\delta_{s(\bX_{\epsilon,j})}$ for $\epsilon\in\{-,+\}$. As noticed in \eqref{eq:auc}, this boils down to maximizing the rank-sum criterion:
$\hat{W}_{n_-,n_+}(s)=\sum_{i=1}^{n_+}R(s(\bX_{+,i}))$,
where $R(s(\bX_{+,i})) =N\hat{F}_{s,N}(s(\bX_{+,i}))$  for $i\in\{1,\; \ldots,\; n_+  \}$, $\hat{F}_{s,N}(t)=(1/N)\sum_{\epsilon\in\{-,+\}}\sum_{i=1}^{n_{\epsilon}}\mathbb{I}\{s(\bX_{\epsilon, i}) \leq t  \}$ for $t\in \mathbb{R}$ and $N=n_++n_-$. As expected, appropriate ranking performance criteria take  the form of \textit{(two-sample linear) rank  statistics}, see \cite{CV07}. In \cite{CleLimVay21}, the empirical ranking performance measures
\begin{equation}\label{eq:crit_emp}
    \hat{W}^{\phi}_{n_-,n_+}(s)=\sum_{i=1}^{n_+}\phi\left( \frac{R(s(\bX_{+,i}))}{N+1} \right)~,
\end{equation}
where $\phi:[0,1]\to \mathbb{R}$ is an increasing \textit{score-generating function} that 
weights the positive ranks involved the functional, are considered. For $\phi(v)=v$, one recovers the MWW statistic and the $\auc$ criterion, see \eqref{eq:auc}. If $F_{s,+}=F_{s,-}$, the ranks of the `positive scores' are uniformly distributed. The distribution $\mathcal{L}^{\phi}_{n_-,n_+}$ of \eqref{eq:crit_emp} is thus independent from the distributions of the $\bX_{\epsilon, i}$'s, and can be tabulated by means of elementary combinatorial computations.
When $n_+=\lfloor pN\rfloor $ and $n_-=\lceil (1-p)N\rceil $ for $p\in (0,1)$, the statistic $(1/N)\hat{W}^{\phi}_{n_-,n_+}(s)$ can be viewed as
an empirical version of $W_{\phi}$-ranking performance:
\begin{multline}\label{eq:W_crit}
W_{\phi}(s)=\mathbb{E}\left[ (\phi \circ F_s)(s(\bX_+)) \right] = \frac{1}{p}\int_{0}^1\phi(v)\dd v \\
-\frac{1-p}{p}\int_{0}^1 \phi\left(p(1-\roc(s,\; \alpha))+(1-p)(1-u)\right)\dd u~,
 \end{multline}
 where $F_s=pF_{s, +}+(1-p)F_{s, -}$ for any $s\in \S$. For any score-generating function $\phi$ that rapidly vanishes near $0$ and takes much higher values near $1$, such as $\phi(v)=v^q$ with $q>1$, the quantity \eqref{eq:W_crit} reflects the behavior of the curve  $\roc(s,\; \cdot)$ near $0$, $\ie$, the probability that $s(\mathbf{X}_+)$ takes the highest values in other words.  As stated in Proposition 6 of  \cite{CleLimVay21}, for any $s, \; s^*\in \S\times  \S^*$, we have $W_{\phi}(s)\leq W^*_{\phi}:= W_{\phi}(\dd F_+ / \dd F_-)=W_{\phi}(s^*)$. If $\phi$ is strictly increasing, 
$\S^*$ coincides with the ensemble of maximizers of $W_{\phi}$. 
In \cite{CleLimVay21}, bounds for the maximal deviations between \eqref{eq:crit_emp} and $NW_{\phi}(s)$ over appropriate classes $\mathcal{S}_0$ have been proved, and generalization results for maximizers of the empirical $W_{\phi}$-ranking performance criterion based on the latter have been established. 
The theoretical analysis carried out subsequently relies on these results.

\subsection{On Dependence through {\rm ROC} Analysis}\label{subsec:rationale}
We now go back to the problem recalled in  section \ref{sec:introduction} and explain why the analysis of $\roc$ curves and their scalar summaries \eqref{eq:W_crit} provide natural tools to test the statistical hypothesis of independence $\mathcal{H}_0$. Consider the notations introduced in section \ref{ssec:BPROC}, and set $\mathcal{Z}=\mathcal{X}\times\mathcal{Y}$, $F_-=H\otimes G$ and $F_+=F$. Our approach relies on the observation that deviations of the curve $\roc^*$ from the main diagonal of $[0,1]^2$, as well as those of $W^*_{\phi}$ from $\int_0^1\phi(v)\dd v$, for appropriate score-generating functions $\phi$,  provide a natural way of measuring the departure from $\mathcal{H}_0$, as revealed by the theorem below.

\begin{theorem}\label{thm:rationale}
The following assertions are equivalent.
\begin{itemize}
    \item[]$(i)$ The hypothesis `$\mathcal{H}_0:\; H\otimes G=F$' holds true.
    \item[] $(ii)$  The optimal $\roc$ curve relative to the bipartite ranking problem defined by the pair $(H\otimes G, F)$ coincides with the diagonal of $[0,1]^2$: 
    $\forall u\in (0,1),\;\; \roc^*(u)=u$.
    \item[] $(iii)$  For any function $\phi(v)$, we have
    $W^*_{\phi}=\int_0^1\phi(v)\dd v~.$
    \item[] $(iv)$  There exists a strictly increasing score-generating function $\phi(u)$ s.t.
    $W^*_{\phi}=\int_0^1\phi(v)\dd v$.
    \item[] $(v)$  We have $\auc^*=1/2$.
\end{itemize}
In addition, we have:
    \begin{equation}\label{eq:aucdist}
    \auc^*-\frac{1}{2}=\int\int
    \left\vert \frac{dF}{d(H\otimes G)}(\bx, \by)-1 \right \vert H(d\bx)G(d\by)~.
    \end{equation}
\end{theorem}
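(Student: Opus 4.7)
The plan is to prove the equivalences via the chain $(i) \Rightarrow (ii) \Rightarrow (iii) \Rightarrow (iv), (v)$ together with the reverse implications $(iv), (v) \Rightarrow (ii) \Rightarrow (i)$, and then to address the identity \eqref{eq:aucdist} separately.

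For the forward chain, $(i) \Rightarrow (ii)$ is immediate: if $F = H \otimes G$ then $F_+ = F_-$, every pushforward pair $(F_{s,+}, F_{s,-})$ coincides and every $\roc(s, \cdot)$ equals the diagonal, hence so does $\roc^*$. For $(ii) \Rightarrow (iii)$, substituting $\roc^*(u) = u$ into \eqref{eq:W_crit} collapses the argument of $\phi$ to $1 - u$; the change of variable $v = 1 - u$ then reduces the second integral to $\int_0^1 \phi(v)\, dv$, which combines with the first term to yield $W^*_\phi = \int_0^1 \phi(v)\, dv$. Specializing to the strictly increasing map $\phi(u) = u$ gives both $(iv)$ directly and $(v)$: via \eqref{eq:auc} we have $W^*_{\mathrm{id}} = \auc^*$ and $\int_0^1 u\, du = 1/2$.

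The reverse implications $(iv) \Rightarrow (ii)$ and $(v) \Rightarrow (ii)$ exploit the fact that $\roc^*$ is concave and lies above the diagonal. From $(iv)$, rearranging \eqref{eq:W_crit} gives $\int_0^1 \phi\bigl(p(1 - \roc^*(u)) + (1-p)(1-u)\bigr) du = \int_0^1 \phi(1-u)\, du$; since $\roc^*(u) \geq u$ implies the integrand on the left is pointwise $\leq \phi(1-u)$, the strict monotonicity of $\phi$ forces equality of the integrands a.e., and the c\`adl\`ag regularity of $\roc^*$ extends this to all $u \in (0,1)$. From $(v)$, the same conclusion follows because a nonnegative concave function with zero integral on $(0,1)$ vanishes identically. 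To close the loop, $(ii) \Rightarrow (i)$ is a Neyman--Pearson-type argument: writing $\Psi = dF/d(H \otimes G)$, the identity $\roc^* \equiv \mathrm{id}$ forces the pushforwards $F_{\Psi, +}$ and $F_{\Psi, -}$ on $[0, \infty]$ to coincide; since $F_{\Psi, +}((c, \infty)) - F_{\Psi, -}((c, \infty)) = \int_{\{\Psi > c\}} (\Psi - 1)\, d(H \otimes G)$, taking $c = 1$ and then $c \uparrow 1$ yields $\Psi \equiv 1$ a.e., so $F = H \otimes G$.

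The main obstacle is the identity \eqref{eq:aucdist}. Starting from $\auc^* - 1/2 = \int_0^1 (\roc^*(u) - u)\, du$ and parameterizing the Neyman--Pearson level sets as $A_u = \{\Psi > c_u\}$ with $c_u$ the $(1-u)$-quantile of $\Psi$ under $H \otimes G$, one writes $\roc^*(u) - u = \int_{A_u} (\Psi - 1)\, d(H \otimes G)$. An application of Fubini, together with the identity $\int_0^1 \I\{\Psi(z) > c_u\}\, du = F_{\Psi, -}(\Psi(z))$, reduces the problem to a one-dimensional integral. The delicate step is converting this signed expression into the $L^1$-norm $\int |\Psi - 1|\, d(H \otimes G)$: since $\int (\Psi - 1)\, d(H \otimes G) = 0$, the positive and negative parts of $\Psi - 1$ have equal mass, and this symmetric accounting is where I expect the bulk of the technical work.
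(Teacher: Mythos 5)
Your treatment of the five equivalences is correct and follows essentially the route the paper itself gestures at: the paper disposes of $(i)\Leftrightarrow(ii)$ by citing Corollary 7 of \cite{CDV09} and declares the remaining equivalences to follow from \eqref{eq:W_crit}, which is exactly the substitution and pointwise-domination argument you carry out explicitly (your version has the advantage of being self-contained, including a direct Neyman--Pearson argument for $(ii)\Rightarrow(i)$). Two small corrections. First, $W^*_{\mathrm{id}}$ is not equal to $\auc^*$ but to the affine transform $p/2+(1-p)\auc^*$ (take $\phi=\mathrm{id}$ in \eqref{eq:W_crit}); since the slope $1-p$ is nonzero this still gives $(iv)\Leftrightarrow(v)$, but the identity you invoke via \eqref{eq:auc} is not literally true. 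Second, in $(ii)\Rightarrow(i)$ the relevant limit is $c\downarrow 0$, not $c\uparrow 1$: the choice $c=1$ gives $\Psi\le 1$ a.e.; then for each $c\in(0,1)$ the vanishing of $\int_{\{\Psi>c\}}(\Psi-1)\,d(H\otimes G)$, whose integrand is now nonpositive, forces $\Psi=1$ a.e.\ on $\{\Psi>c\}$, and letting $c\downarrow 0$ covers $\{\Psi>0\}$, which has full mass because $\int\Psi\,d(H\otimes G)=1$.

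The genuine gap is \eqref{eq:aucdist}, which you leave open at precisely the step (``converting this signed expression into the $L^1$-norm'') that cannot be carried out: the identity as printed is false, so no ``symmetric accounting'' will close it. The correct statement is the Gini-mean-difference formula $\auc^*-\tfrac12=\tfrac14\iint\vert\Psi(z)-\Psi(z')\vert\,(H\otimes G)(dz)\,(H\otimes G)(dz')$, obtained by writing $\auc^*=\mathbb{E}\bigl[\I\{\Psi(Z_+)>\Psi(Z_-)\}+\tfrac12\I\{\Psi(Z_+)=\Psi(Z_-)\}\bigr]$, replacing $dF$ by $\Psi\,d(H\otimes G)$ and symmetrizing in $(z,z')$, so that the integrand becomes $\tfrac12(\Psi(z)+\Psi(z'))+\tfrac12\vert\Psi(z)-\Psi(z')\vert$. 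To see that this differs from the right-hand side of \eqref{eq:aucdist}, take the FGM model of the paper's Example 2 with $H=G$ uniform on $(0,1)$ and $\Psi(x,y)=1+\rho(2x-1)(2y-1)$: the right-hand side of \eqref{eq:aucdist} equals $\vert\rho\vert/4$, whereas $\auc^*-\tfrac12=\tfrac{\vert\rho\vert}{4}\,\mathbb{E}\vert UV-U'V'\vert\le \vert\rho\vert/8$ for $U,V,U',V'$ i.i.d.\ uniform on $(-1,1)$. (Both sides still vanish exactly under $\mathcal{H}_0$, so the five equivalences are unaffected.) For what it is worth, the paper's own proof is entirely silent on \eqref{eq:aucdist}, so you are not missing an argument that exists there; but as written your proposal neither establishes the displayed identity nor observes that it needs to be corrected.
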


 Hence, the optimal curve $\roc^*$ quantifies the dissimilarity between the  $H\otimes G$ and $F$, as depicted by Eq. \eqref{eq:aucdist}.

\begin{example}{\sc (Multivariate Gaussian Variables)}\label{exgauss} Consider a centered Gaussian r.v. $(\bX,\; \bY)$ with definite positive covariance $\Gamma$, valued in $\mathbb{R}^{q}\times \mathbb{R}^{l}$. Denote by $\Gamma_{\bX}$ and $\Gamma_{\bY}$ the (definite positive) covariance matrices of the components $\bX$ and $\bY$. As an increasing transform of the likelihood ratio, the quadratic scoring function
 $s : z\in \mathbb{R}^{q+l}\mapsto   z^t (\Gamma^{-1}-diag(\Gamma_{\bX}^{-1}, \Gamma_{\bY}^{-1}) )z$
is optimal. 
When $\text{Cov}(X^1,Y^k) = \rho$, for all $k\leq l$, with $\rho\in[0,1)$,  and $\Gamma_{i,j} = \delta_{ij}$ otherwise, 
the hypothesis $\mathcal{H}_0$ is naturally true \textit{iff} $\rho = 0$. The optimal $\roc$ curve is plotted in Fig. \ref{fig:exampleGL} for different values of the parameter $\rho$, such that $\Gamma$ is positive definite, and $q=l=5$. We further refer to section \ref{appssec:gaussexample} for an advanced analysis in light of the proposed method.

\end{example}
\begin{figure}[ht!]
\centering
\begin{tabular}{cc}
\parbox{0.5\textwidth}{
\includegraphics[width=0.35\textwidth]{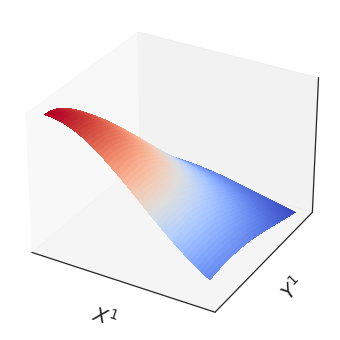}
} 
\parbox{0.5\textwidth}{
\includegraphics[scale=0.3]{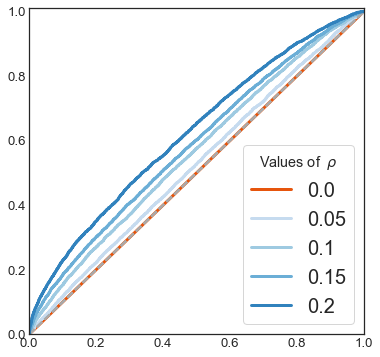}
}
\end{tabular}
\vspace{-0.2cm}
\caption{Left: Joint Gaussian density for $\rho=0.20$ of $(X^1, Y^1)$. Right: Plots of the optimal $\roc$ curves for two Gaussian vectors with linear correlation $\rho \in\{0.0, \; 0.05, \; 0.10, \; 0.15, \; 0.20\} $ and $q=l=5$.} 
    \label{fig:exampleGL} 
\end{figure}  
    
\paragraph{Ranking-based rank tests of independence.} Theorem \ref{thm:rationale} shows that the testing problem \eqref{eq:problem} can be reformulated in terms of properties of the optimal $\roc$ curve, related to the bipartite ranking problem $(H\otimes G, F)$, as
`$\mathcal{H}_0:\;\; \auc^*=1/2$ \textit{vs.} $\mathcal{H}_1: \auc^*>1/2$',
or, equivalently, as
`$\mathcal{H}_0:\;\; W_{\phi}^*=\int_0^1\phi(u)\dd u$ \textit{vs.}  $\mathcal{H}_1: \;\; W_{\phi}^*>\int_0^1\phi(u)\dd u$', 
for any given strictly increasing score generating function $\phi(u)$. It is noteworthy that these formulations are \textit{unilateral}, the optimal $\roc$ curve being necessarily above the diagonal. From a practical perspective, the curve $\roc^*$ as well as its scalar summaries, such as $\auc^*$ or $W^*_{\phi}$, are unknown.
The approach we propose is thus implemented in three steps. After splitting the samples $\{(\bX_1,\bY_1)\;\ldots,\; (\bX_N,\bY_N)\}$ into two parts: 1) based on the first part, build two independent $\iid$ samples with respective distributions $H\otimes G$ and $F$, then 2) solve the corresponding bipartite ranking problem and produce a scoring function $\widehat{s}(z)$, as described above. Finally, 3) perform a univariate rank-based test  based on a statistic of type \eqref{eq:crit_emp} computed from the second part of the data, once scored using $\widehat{s}$, to detect possible statistically significant deviations between the $\roc$ curve and the diagonal.  

The subsequent sections provide both theoretical and empirical evidence that, beyond the fact that they are nearly unbiased, such testing procedures permit to detect very small departures, of various types, from the hypothesis of independence.

\section{METHODOLOGY AND THEORY}\label{sec:main}
We now describe at length the testing procedure previously sketched, and next establish the related theoretical guarantees by proving nonasymptotic bounds for the two types of testing error. Throughout this section, we set $F_-=H\otimes G$ and $F_+=F$. 
\subsection{Ranking-based Rank Test Statistics}\label{ssec:proc}
Following section \ref{sec:background}, 
two steps are required to implement the procedure proposed. Let $n<N$ be an even integer. Hence, we use a classic two-split trick to independently divide the original $\iid$ sample
$\{(\mathbf{X}_1,\bY_1)\; \ldots,\; (\mathbf{X}_N,\bY_N)\}$ into two: 
\begin{eqnarray*}
    \mathcal{D}_{n} &:=& \{ (\bX_i,\bY_i)  :\; i=1,\; \ldots,\; n\}\\
    \mathcal{D}'_{n'} &:=& \{ (\bX_i,\bY_i)  :\; i=n+1,\; \ldots,\; N\}~,
\end{eqnarray*}
with $n<N$ and $n'=N-n$. Fix $p\in (0,1)$, set $n_+=\lfloor pn\rfloor=n-n_-$ and $n'_+=\lfloor  pn'\rfloor=n'-n'_{-}$. Consider two independent random variables $\sigma$ and $\sigma'$, defined on the same probability space $(\Omega, \mathcal{F},\mathbb{P})$ as the $(\bX_i,\bY_i)$'s and independent of the latter, uniformly distributed in $\mathfrak{S}_{n_-}$ and $\mathfrak{S}_{n'_-}$ respectively. From the first part $\mathcal{D}_n$, one considers the two samples:  $  \mathcal{D}^-_{n_-}=\{(\bX_i, \bY_{\sigma(i)})_{ 1\leq i\leq  n_-}\},\quad \mathcal{D}^+_{n_+}=\{(\bX_i, \bY_{i})_{ 1+n_-\leq i\leq  n}\}$, whereas the samples below are formed from the second part  
        $\mathcal{D}'^{-}_{n'_-}=\{(\bX_i, \bY_{n+\sigma'(i-n)})_{ 1+n\leq i\leq n+ n'_-}\} , \quad \mathcal{D}'^{+}_{n'_+}=\{(\bX_i, \bY_{i})_{ 1+n+n'_-\leq i\leq  N}\}$.

\begin{proposition}\label{prop:build_samples} The following assertions hold true.

$(i)$ The samples $\mathcal{D}^-_{n_-}$, $\mathcal{D}^+_{n_+}$, $\mathcal{D}'^-_{n'_-}$, and $\mathcal{D}'^+_{n'_+}$ are independent.

$(ii)$ For $\epsilon\in\{-,+\}$, $\mathcal{D}^{\epsilon}_{n_{\epsilon}}$ and  $\mathcal{D}'^{\epsilon}_{n'_{\epsilon}}$ are $\iid$ samples  with distribution $F_{\epsilon}$.
\end{proposition}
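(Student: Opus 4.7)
My plan is to prove the two parts in sequence, both by exploiting the joint independence structure of the iid pairs $(\bX_i, \bY_i)_{i=1}^N$ and of the permutations $\sigma, \sigma'$ (independent of each other and of the data).

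For part (i), I would begin by identifying the minimal measurable information each subsample depends on: $\mathcal{D}^-_{n_-}$ is a function of $\{(\bX_i, \bY_i)\}_{1 \leq i \leq n_-}$ and $\sigma$; $\mathcal{D}^+_{n_+}$ of $\{(\bX_i, \bY_i)\}_{n_- +1 \leq i \leq n}$; $\mathcal{D}'^-_{n'_-}$ of $\{(\bX_i, \bY_i)\}_{n+1 \leq i \leq n + n'_-}$ and $\sigma'$; and $\mathcal{D}'^+_{n'_+}$ of $\{(\bX_i, \bY_i)\}_{n + n'_- + 1 \leq i \leq N}$. These four families of pairs involve pairwise disjoint index ranges within $\{1, \ldots, N\}$, and so are mutually independent by the iid assumption on the original sample. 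Adjoining $\sigma$ and $\sigma'$ to the corresponding subsamples preserves this independence since both are mutually independent and independent of the data; this yields the mutual independence claimed in (i).

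For part (ii), the claim for the `$+$'-subsamples $\mathcal{D}^+_{n_+}$ and $\mathcal{D}'^+_{n'_+}$ is immediate: each consists of the original pairs $(\bX_i, \bY_i)$ restricted to a contiguous block of indices, hence is iid from $F = F_+$. For $\mathcal{D}^-_{n_-}$ (the argument for $\mathcal{D}'^-_{n'_-}$ is identical), I would compute the joint law of $((\bX_i, \bY_{\sigma(i)}))_{1 \leq i \leq n_-}$ by conditioning on $\sigma$ and then integrating out its uniform law on $\mathfrak{S}_{n_-}$: given $\sigma = \pi$, this joint law is the pushforward of $F^{\otimes n_-}$ by the map that permutes the $\bY$-coordinates via $\pi$, and one then has to check that the uniform average over $\pi$ collapses to $(H \otimes G)^{\otimes n_-}$.

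The step requiring the most care is precisely this last averaging identity. A per-coordinate calculation shows that contributions come either from $F$ (when $\sigma$ has a fixed point at $i$) or from $H \otimes G$ (otherwise), so the mixture structure over the fixed-point pattern of $\sigma$ must be tracked with some attention. I would leverage the full symmetry of the uniform law on $\mathfrak{S}_{n_-}$ together with the exchangeability of the $(\bY_j)$'s to organize this combinatorial computation, expecting the desired factorization to reduce to an elementary symmetrization argument on permutations.
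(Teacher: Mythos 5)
Your argument for assertion $(i)$ is correct and is essentially the one the paper intends: each of the four subsamples is a measurable function of a block of original pairs, these blocks being pairwise disjoint, together with $\sigma$ (resp.\ $\sigma'$), and all of these ingredients are mutually independent. The `$+$'-part of assertion $(ii)$ is likewise immediate, as you say.

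The gap is in the step you yourself flagged as delicate, and it cannot be closed as written: the averaging identity you hope to obtain by ``an elementary symmetrization argument'' is false unless $F=H\otimes G$. Conditionally on $\sigma=\pi$, the law of $\bigl((\bX_i,\bY_{\pi(i)})\bigr)_{1\leq i\leq n_-}$ is the pushforward of $F^{\otimes n_-}$ by the $\bY$-permutation, and averaging over the uniform $\pi\in\mathfrak{S}_{n_-}$ produces a \emph{mixture}, not a product measure. Already at the level of a single coordinate, $\mathbb{P}\{\sigma(i)=i\}=1/n_-$, so the marginal law of $(\bX_i,\bY_{\sigma(i)})$ is $\tfrac{1}{n_-}F+\bigl(1-\tfrac{1}{n_-}\bigr)H\otimes G$, which equals $H\otimes G$ only under $\mathcal{H}_0$; for $n_-=2$ the full joint law is $\tfrac12 F^{\otimes 2}+\tfrac12 (H\otimes G)^{\otimes 2}$. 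Moreover, even conditioning $\sigma$ to have no fixed points does not rescue independence: if $\pi(i)=j$ with $i\neq j$, the entries $(\bX_i,\bY_j)$ and $(\bX_j,\bY_{\pi(j)})$ are coupled through the dependent original pair $(\bX_j,\bY_j)$, so the elements of $\mathcal{D}^-_{n_-}$ are exchangeable but not independent under $\mathcal{H}_1$. Hence assertion $(ii)$ for the negative samples holds exactly only under $\mathcal{H}_0$, where it is trivial (all of $\bX_1,\ldots,\bX_{n_-},\bY_1,\ldots,\bY_{n_-}$ are then mutually independent, so any re-pairing over disjoint indices yields an i.i.d.\ $H\otimes G$ sample). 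The paper's own justification is a one-sentence appeal to the construction and does not confront this point; a construction for which $(ii)$ holds exactly under every alternative must pair each $\bX_i$ with a $\bY_j$ drawn from a disjoint block of indices, so that the $n_-$ negative pairs involve $2n_-$ distinct original observations. Your plan cannot be completed without such a modification, or without restricting the claim to the null hypothesis.
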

Now that we are equipped with the two pairs of negative/positive samples constructed above,  the procedure we propose requires two ingredients: a bipartite ranking algorithm $\mathcal{A}$ that permits to construct a scoring function $\hat{s}=\mathcal{A}(\mathcal{D}^-_{n_-}, \mathcal{D}^+_{n_+})$ based on the first part of the data (see the algorithms in \textit{e.g.} \cite{FISS03}, \cite{Rak04}, \cite{RCMS05}, \cite{Rud06} or \cite{lambdarank07}) and a strictly increasing score-generating function $\phi$. As explained in section \ref{sec:background}, the independence testing problem \eqref{eq:problem} for the couple $(H\otimes G, F)$ can be expressed as follows:
\begin{equation}\label{eq:problem2}
    \mathcal{H}_0:\; W_{\phi}^*=\int_0^1\phi(u)\dd u \;\; \text{vs.} \;\;  \mathcal{H}_1: \; W_{\phi}^*>\int_0^1\phi(u)\dd u~.
\end{equation}
Notice that the formulation above is unilateral, the optimal curve $\roc^*$ being always above the first diagonal, or equivalently, the pushforward distribution of $F$ by $\Psi(x,y)$ is always stochastically larger than that of $H\otimes G$.
Relying on \eqref{eq:problem2}, one computes the values taken by the scoring function $\hat{s}(x,y)$ over the pooled data set $\mathcal{D}'^-_{n'_-}\cup \mathcal{D}'^+_{n'_+}$ and next the following version of the  statistic \eqref{eq:crit_emp}:
\begin{eqnarray}\label{eq:Whatind}
    \widehat{W}^{\phi}_{n'_-,n'_+}(\hat{s}, \sigma')=\sum_{i=1+n+n'_-}^N\phi\left( \frac{R'_{\sigma'}(\hat{s}(\bX_i,\bY_i))}{n'+1} \right)~,
\end{eqnarray}
where the ranks are defined on $\mathcal{D}'_{n'}$ by 
$R'_{\sigma'}(t) = \sum_{i=1+n+n'_-}^N \mathbb{I}\{\hat{s}(\bX_i,\bY_i) \leq t\} + \sum_{i=1+n}^{n+n'_-}$ $ \mathbb{I}\{\hat{s}(\bX_i,\bY_{n+\sigma'(i-n)}) \leq t\}$. 
Under $\mathcal{H}_0$, the test statistic \eqref{eq:Whatind} has distribution $\mathcal{L}^{\phi}_{n'_-,n'_+}$, similar to the univariate rank statistic defined in \eqref{eq:crit_emp} by Proposition \ref{prop:build_samples}. 
Fix now the desired level $\alpha\in (0,1)$ of the test of independence. Consider the $(1-\alpha)$-quantile $q^{\phi}_{n'_-,n'_+}(\alpha)$ of the pushforward distribution of $\mathcal{L}^{\phi}_{n'_-,n'_+}$, by the mapping $w\mapsto (1/n')w-\int_0^1\phi(u)\dd u$,  depending only on $\phi$, $n'_+$ and $n'_-$. 
Proposition \ref{prop:testPhi} proves that constructing a test based on the statistic \eqref{eq:Whatind} and using this $(1-\alpha)$-quantile $q^{\phi}_{n'_-,n'_+}(\alpha)$  as testing threshold, has exact type-I error less than $\alpha$ by the bound  \eqref{propeq:testPhi}. Figure \ref{fig:procindtest} summarizes the procedure.
\begin{proposition}\label{prop:testPhi}{\sc(Type-I Error Bound.)}
Let $\alpha\in (0,1)$ and let a scoring function $\hat{s}=\mathcal{A}(\mathcal{D}^-_{n_-}, \mathcal{D}^+_{n_+})$. The test statistic for testing \eqref{eq:problem2}, based on the second part of the data $\mathcal{D}'^{-}_{n'_-}\cup\mathcal{D}'^{+}_{n'_+}$, is defined by: 
\begin{equation}\label{eq:test}
\Phi_{\alpha}^{\phi}=
\mathbb{I}\left\{ \frac{1}{n_+'}\widehat{W}^{\phi}_{n'_-,n'_+}(\hat{s}, \sigma')> \int_0^1\phi(u)\dd u  +  q^{\phi}_{n'_-,n'_+}(\alpha) \right\}
\end{equation}
   Under $\mathcal{H}_0$, we have for any pair of distributions $(H\otimes G, F)$ and for all $1\leq n_-'< n'$  and $1\leq \; n_+'< n'$:
   \begin{equation}\label{propeq:testPhi}
       \mathbb{P}_{\mathcal{H}_0}\left\{ \Phi_{\alpha}^{\phi}
(\mathcal{D}_{n'}'(\hat{s}))
=+ 1  \right\}\leq \alpha~,
   \end{equation}
   where $\mathcal{D}_{n'}'(s)$ denotes the dataset obtained by mapping the observations of $\mathcal{D}_{n'}'$ by any scoring function $s$.
\end{proposition}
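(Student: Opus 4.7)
The argument is essentially a conditioning/invariance argument: under $\mathcal{H}_0$, the test statistic depends on the second part of the data only through a rank vector whose distribution is universal.

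First, I would condition on the first part of the sample $\mathcal{D}_n$ and on the auxiliary permutation $\sigma$. Since $\hat{s} = \mathcal{A}(\mathcal{D}^-_{n_-}, \mathcal{D}^+_{n_+})$ is built from $\mathcal{D}_n$ (and $\sigma$) only, it becomes a fixed measurable function on $\mathcal{X}\times\mathcal{Y}$ after conditioning. By Proposition \ref{prop:build_samples}$(i)$, $\mathcal{D}'^-_{n'_-}$ and $\mathcal{D}'^+_{n'_+}$ are independent of $\mathcal{D}_n$, and jointly independent from one another given $\sigma'$; by Proposition \ref{prop:build_samples}$(ii)$, they are \iid\ with distributions $F_-=H\otimes G$ and $F_+=F$ respectively. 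Under $\mathcal{H}_0$, one has $F_-=F_+$, so that the $n'=n'_-+n'_+$ pooled observations of $\mathcal{D}'^-_{n'_-}\cup\mathcal{D}'^+_{n'_+}$ are \iid\ copies of the same random pair.

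Second, applying the (now fixed) scoring function $\hat s$ to the pooled data produces $n'$ \iid\ real random variables with common distribution $F_{\hat s, -}=F_{\hat s, +}$. The key consequence is that the rank vector $(R'_{\sigma'}(\hat s(\bX_i,\bY_i)))$ of the scored positive observations in the pooled sample is exchangeable: its law is the uniform distribution on the set of injective maps from $\{1,\ldots,n'_+\}$ into $\{1,\ldots,n'\}$ (up to a standard tie-breaking convention, which I would address by invoking the continuity of $F_{\hat s, -}$ almost surely, or by using a uniform randomization, as is standard for rank tests). In particular, the statistic $\widehat{W}^{\phi}_{n'_-,n'_+}(\hat s,\sigma')$ has exactly the distribution $\mathcal{L}^{\phi}_{n'_-,n'_+}$ introduced in section \ref{sec:background}, which is free of both $(H\otimes G, F)$ and of $\hat s$.

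Third, by the very definition of the quantile $q^{\phi}_{n'_-,n'_+}(\alpha)$ as the $(1-\alpha)$-quantile of the pushforward of $\mathcal{L}^{\phi}_{n'_-,n'_+}$ by $w\mapsto (1/n')w - \int_0^1\phi(u)du$, the conditional probability
\[
\mathbb{P}_{\mathcal{H}_0}\left\{ \frac{1}{n'_+}\widehat{W}^{\phi}_{n'_-,n'_+}(\hat s,\sigma')-\int_0^1\phi(u)du > q^{\phi}_{n'_-,n'_+}(\alpha) \;\Big|\; \mathcal{D}_n,\sigma\right\}
\]
is at most $\alpha$. Integrating out the conditioning yields \eqref{propeq:testPhi}.

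The only real obstacle is the tie-breaking issue: when $\hat s$ is not almost surely injective on the support of the scored distribution, the rank vector is not exactly uniformly distributed and $\mathcal{L}^{\phi}_{n'_-,n'_+}$ needs either a mild regularity assumption on $\hat s$ or an auxiliary uniform randomization in the rank definition. This is a standard technicality in rank-based testing that does not affect the structure of the argument above.
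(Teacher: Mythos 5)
Your proposal is correct and follows essentially the same route as the paper: condition on the first half of the sample so that $\hat s$ becomes fixed, invoke Proposition \ref{prop:build_samples} for independence and the fact that under $\mathcal{H}_0$ the pooled scored observations are \iid{} so the statistic has the distribution-free law $\mathcal{L}^{\phi}_{n'_-,n'_+}$, apply the definition of the quantile $q^{\phi}_{n'_-,n'_+}(\alpha)$, and integrate out the conditioning. Your explicit treatment of the exchangeability of the rank vector and the tie-breaking caveat are details the paper leaves implicit, but they do not change the argument.
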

The type-I error is exactly controlled, and essentially independent of the scoring function and holds true for any sample size $n'$. 
\begin{figure*}[t!]
    \fbox{\begin{minipage}{\textwidth} 
        \medskip
    \begin{center}
    {\large \textbf{Ranking-based Independence Rank Testing}}
    \end{center}
    \medskip
 \small
 \hspace{0.5cm}{\bf Input.} Collection of $N\geq 1$ $\iid$ copies $\mathcal{D}_N=\{(\bX_1,\bY_1)\; \ldots,\; (\bX_N,\bY_N)\}$ of $(\bX,\bY)$;  subsample sizes $n=n_++n_-<N$ and $n' = N-n=n'_++n'_-$; bipartite ranking $\mathcal{A}$ algorithm operating on the class $\S_0$ of scoring functions on $\mathcal{X}\times \mathcal{Y}$; score-generating function $\phi$; target level  $\alpha\in (0,1)$; quantile $q^{\phi}_{n'_-,n'_+}(\alpha)$.
    \begin{enumerate}		
        \item {\bf Splitting and  Shuffling.} Divide the initial sample into two subsamples $\mathcal{D}_N = \mathcal{D}_{n} \cup \mathcal{D}_{n'}'$.
        
        Independently from the $(\bX_i,\bY_i)'s$, draw uniformly at random two independent permutations $\sigma$ and $\sigma'$ in $\mathfrak{S}_{n_-}$ and $\mathfrak{S}_{n'_-}$ respectively, in order to build the independent samples: 
        $  \mathcal{D}^-_{n_-}=\{(\bX_i, \bY_{\sigma(i)})_{ 1\leq i\leq  n_-}\},\quad \mathcal{D}^+_{n_+}=\{(\bX_i, \bY_{i})_{ 1+n_-\leq i\leq  n}\}$, and 
        $\mathcal{D}'^{-}_{n'_-}=\{(\bX_i, \bY_{n+\sigma'(i-n)})_{ 1+n\leq i\leq n+ n'_-}\} , \quad \mathcal{D}'^{+}_{n'_+}=\{(\bX_i, \bY_{i})_{ 1+n+n'_-\leq i\leq  N}\}$.
            \item {\bf Bipartite Ranking.} 
   Run the bipartite ranking algorithm $\mathcal{A}$ based on the pooled training dataset $  \mathcal{D}_n=\mathcal{D}^-_{n_-}\cup \mathcal{D}^+_{n_+}$ 
   built at the previous step, in order to learn the scoring function $\hat{s}
            =\mathcal{A}(\mathcal{D}_n )$.
            \item {\bf Scoring and Two-sample Rank Statistic.} Build the univariate positive/negative subsamples using the scoring function $\hat{s}$ learned at the previous step   $\{\hat{s}(\bX_{n+1}, \bY_{n+\sigma'(1)}), \ldots,\hat{s}(\bX_{n+n'_-}, \bY_{n+\sigma'(n'_-)})\}$
                and $ \{\hat{s}(\bX_{n+n'_-+1},\bY_{n+n'_-+1}), \ldots, \hat{s}(\bX_N, \bY_N)\}$.
            Sort them by decreasing order to compute 
            \begin{equation}\label{eq:step2}
               \widehat{W}^{\phi}_{n'_-,n'_+}(\hat{s}, \sigma')=\sum_{i=1+n+n'_-}^N\phi\left( \frac{R'_{\sigma'}\left(\hat{s}(\bX_i,\bY_i)\right)}{n'+1} \right)~.
               \end{equation}
            \end{enumerate}
           \hspace{0.5cm}
          {\bf Output.} Compute the outcome of the test of level $\alpha$ based on the test statistic \eqref{eq:step2}, $\ie$, accept $\cH_0$ if:
      $$
      \frac{1}{n_+'}\widehat{W}^{\phi}_{n'_-,n'_+}(\hat{s}, \sigma')\leq \int_0^1\phi(u)\dd u  +  q^{\phi}_{n'_-,n'_+}(\alpha) ~, \quad \text{and reject it otherwise.}
      $$ 
    \end{minipage} }
    \caption{Ranking-based independence rank test.}\label{fig:procindtest}
\end{figure*}

\subsection{Nonasymptotic Theoretical Guarantees Under the Alternative - Error Bound}
We now investigate the theoretical properties of the test procedure previously described in the specific situation, where the bipartite ranking step is accomplished by maximizing, over a class $\S_0$ of scoring functions $s(x,y)$ on $\mathcal{X}\times \mathcal{Y}$, the empirical $W_{\phi}$-ranking performance measure computed from $\mathcal{D}^{-}_{n_-}\cup \mathcal{D}^{+}_{n_+}$:
\begin{eqnarray}\label{eq:Whatind2}
    \widehat{W}^{\phi}_{n_-,n_+}(s, \sigma)=\sum_{i=1+n_-}^n\phi\left( \frac{R_{\sigma}(s(\bX_i,\bY_i))}{n+1} \right)~,
\end{eqnarray}
where  $R_{\sigma}(t) = \sum_{i=1+n_-}^n \mathbb{I}\{s(\bX_i,\bY_i) \leq t\} + \sum_{i=1}^{n_-} \mathbb{I}\{s(\bX_i,\bY_{\sigma(i)}) \leq t\}$. We thus consider
\begin{equation}\label{eq:shatemp}
    \hat{s}\in \argmax_{s\in \S_0} \; \widehat{W}_{n_-,n_+}^{\phi}(s, \sigma)~.
\end{equation}
We  focus on establishing a uniform nonasymptotic bound for the type-II error of the test statistic $ \Phi_{\alpha}^{\phi}$. 
It relies on the generalization properties of \eqref{eq:shatemp} $\wrt$ the deficit of $W_{\phi}$-ranking performance, investigated at length in \cite{CleLimVay21} (practical optimization issues are beyond the scope of the present paper, one may refer to \cite{CleLimVay21} for a dedicated study). 
The following technical assumptions are required to apply the related guarantees, and  refer to the Suppl.  Material for explicit definitions and details.
\begin{hyp}\label{hyp:phic2}
	The score-generating function $\phi : [0,1] \mapsto \RR$, is nondecreasing, of class $\mathcal{C}^2$.
\end{hyp}
\begin{hyp}\label{hyp:sabscont}
Let $M>0$.	For all $s\in \S_0$, the pushforward distributions of $F$ and $H\otimes G$ by the mapping $s(x,y)$ are continuous, with density functions that are twice differentiable and have Sobolev $\mathcal{W}^{2,\infty}$-norms 
 bounded by $M<+\infty$.
\end{hyp}
\begin{hyp}\label{hyp:VC}
	The class of scoring functions $\S_0$ is a Vapnik-Chervonenkis ({\sc VC}) class of finite {\sc VC} dimension $\V<\infty$.
\end{hyp}
Considering the quantity $W^*_{\phi}-\int_{0}^1\phi(u)\dd u$ to describe the departure from the null hypothesis $\mathcal{H}_0$ (see Theorem \ref{thm:rationale}) and the bias model $W^*_{\phi}-\sup_{s\in \S_0}W_{\phi}(s)$ inherent in the bipartite ranking step (when formulated as empirical $W_{\phi}$-ranking performance maximization), we introduce the two (nonparametric) classes of pairs of probability distributions on $\mathcal{X}\times \mathcal{Y}$.
\begin{definition}\label{def:depart} Let $\varepsilon>0$. 
	Denote by $\mathcal{H}_1(\varepsilon)$ the set of alternative hypotheses corresponding to all of probability distributions $F$ on $\mathcal{X}\times \mathcal{Y}$ s.t.
$ W^*_{\phi}-\int_{0}^1\phi(u)\dd u \geq \varepsilon~$,
where we recall $W_{\phi}^*= W_{\phi}(s^*) =  W^*_{\phi}(\dd F/\dd (H\otimes G) )$ for any $s^*\in \S^*$.
\end{definition}
\begin{definition}\label{def:bias}
Let $\delta>0$, $\S_0\subset \S$. We denote by $\mathcal{B}(\delta)$ the set of  all pairs $(H\otimes G, F)$  of probability distributions on $\mathcal{X}\times \mathcal{Y}$  such that
$ W^*_{\phi}- \sup_{s\in \S_0}W_{\phi}(s) \leq \delta$.
\end{definition}
The theorem below provides a rate bound for the type-II error of the ranking-based rank test \eqref{eq:test} of size $\alpha$. It depends on the sample sizes $n$ used for bipartite ranking, and on $n'=N-n$ for performing the rank test based on the learned scoring function, see Fig. \ref{fig:procindtest}. 

\begin{theorem}\label{thm:typeII} {\sc (Type-II error bound.)} Let $\phi(u)$ be a score-generating function and $\varepsilon>\delta>0$. Let $\sigma, \sigma'$ two independent permutations drawn $\resp$ from   $\mathfrak{S}_{n_-}$ and $\mathfrak{S}_{n'_-}$, independent of the $\bX_i$s, $\bY_j$s. Fix $\alpha\in (0,1)$. Suppose that Assumptions \ref{hyp:phic2}-\ref{hyp:VC} are fulfilled. Let $p \in (0,1)$ such that $n \wedge n' \geq 1/p$. Set $n_+= \lfloor p n \rfloor$ and $n_- = \lceil (1-p)n \rceil = n - n_+$, as well as  $n_+'= \lfloor pn' \rfloor$ and $n_-' = \lceil (1-p)n' \rceil = n' - n_+'$.
Then, there exist constants $C_1$ and $C_2 \geq 24$, depending on $(\phi, \; \V)$, such that the type-II error of  the test \eqref{eq:test} is uniformly bounded:
\begin{multline}\label{eq:typeII}
	\underset{\begin{subarray}{c}(H\otimes G, F)
 \in \mathcal{H}_1(\varepsilon)\cap \mathcal{B}(\delta)
 \end{subarray}}{\sup}
 \mathbb{P}_{\mathcal{H}_1}\left\{ \Phi^{\phi}_{\alpha} =0  \right\}  \leq 18 \exp\left(-\frac{Cn'( \varepsilon - \delta)^2}{16}\right)\\
	+ C_2 \left(1+\frac{\varepsilon-\delta}{32C_1\kappa_p}\right) ^{ -np\kappa_p( \varepsilon - \delta)/ (8C_2)}
\end{multline}
as soon as $n'\geq 4\log(18/\alpha)/(C(\varepsilon-\delta)^2)$ and $n\geq  16C_1^2/(p(\varepsilon-\delta)^2)$, with constants $\kappa_p = p\wedge  (1-p)$, $C=8^{-1}\min\left(p/\lVert \phi \rVert_{\infty}^2,  (p \lVert \phi' \rVert_{\infty}^2)^{-1}, ((1-p) \lVert \phi' \rVert_{\infty}^2)^{-1} \right) $,  the $C_j$'s are explicitly detailed in the proof. 
\end{theorem}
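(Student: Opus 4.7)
The approach is the classical empirical risk minimization decomposition of the type-II error, adapted to the rank-based setting. Writing $T_{n'}:=(1/n'_+)\widehat{W}^{\phi}_{n'_-,n'_+}(\hat{s},\sigma')$, the non-rejection event $\{\Phi^\phi_\alpha=0\}$ is exactly $\{T_{n'}\leq \int_0^1\phi(u)\dd u+q^\phi_{n'_-,n'_+}(\alpha)\}$. I would decompose the signed excess into four pieces,
\begin{equation*}
T_{n'}-\int_0^1\phi(u)\dd u = [T_{n'}-W_\phi(\hat{s})]+[W_\phi(\hat{s})-\sup_{s\in\S_0}W_\phi(s)]+[\sup_{s\in\S_0}W_\phi(s)-W_\phi^*]+[W_\phi^*-\int_0^1\phi(u)\dd u]~,
\end{equation*}
and use $(H\otimes G,F)\in\mathcal{H}_1(\varepsilon)\cap\mathcal{B}(\delta)$ to lower bound the last two brackets by $-\delta+\varepsilon$. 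Setting $A:=W_\phi(\hat{s})-T_{n'}$ and $B:=\sup_{s\in\S_0}W_\phi(s)-W_\phi(\hat{s})$ (both nonnegative), this yields $T_{n'}-\int_0^1\phi(u)\dd u \geq (\varepsilon-\delta)-A-B$, so that $\{\Phi^\phi_\alpha=0\}$ forces $A+B\geq (\varepsilon-\delta)-q^\phi_{n'_-,n'_+}(\alpha)$.

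Next, I would bound $q^\phi_{n'_-,n'_+}(\alpha)$ deterministically by a Hoeffding/McDiarmid inequality applied to the bounded linear two-sample rank statistic whose law is $\mathcal{L}^\phi_{n'_-,n'_+}$ (see \cite{CleLimVay21}): this gives $q^\phi_{n'_-,n'_+}(\alpha)\leq \sqrt{\log(1/\alpha)/(Cn')}$ with the constant $C$ of the statement, which is at most $(\varepsilon-\delta)/2$ under the assumed $n'\geq 4\log(18/\alpha)/(C(\varepsilon-\delta)^2)$. A union bound on $\{A\geq (\varepsilon-\delta)/4\}\cup\{B\geq(\varepsilon-\delta)/4\}$ then gives
\begin{equation*}
\mathbb{P}_{\mathcal{H}_1}\{\Phi^\phi_\alpha=0\}\leq \mathbb{P}\{A\geq (\varepsilon-\delta)/4\}+\mathbb{P}\{B\geq (\varepsilon-\delta)/4\}.
\end{equation*}

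The two remaining tail probabilities are treated using the independence structure of Proposition \ref{prop:build_samples}. For $A$: conditioning on $(\mathcal{D}_n,\sigma)$, $\hat{s}$ becomes fixed and $\mathcal{D}'^-_{n'_-}\cup \mathcal{D}'^+_{n'_+}$ is an iid sample from $H\otimes G$ and $F$ independent of the training step; applying the single-scoring-function concentration of the normalized two-sample rank statistic around $W_\phi(\hat{s})$ from \cite{CleLimVay21} conditionally and integrating out yields the sub-Gaussian first term of \eqref{eq:typeII}, the prefactor $18$ absorbing one- vs.\ two-sided corrections and the quantile-control step. For $B$: since $\hat{s}$ maximizes $\widehat{W}^\phi_{n_-,n_+}(\cdot,\sigma)$ on $\S_0$, the standard ``excess risk'' bound
\begin{equation*}
B \leq 2\sup_{s\in\S_0}\big|(1/n_+)\widehat{W}^\phi_{n_-,n_+}(s,\sigma)-W_\phi(s)\big|
\end{equation*}
reduces matters to the uniform concentration of the two-sample $R$-process indexed by the VC class $\S_0$, which is the Bennett-type bound established in \cite{CleLimVay21} under Assumptions \ref{hyp:phic2}--\ref{hyp:sabscont}. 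Plugging $t=(\varepsilon-\delta)/4$ into that bound produces the $(1+\cdot)^{-\cdot}$ form of the second term of \eqref{eq:typeII}, and the size condition $n\geq 16C_1^2/(p(\varepsilon-\delta)^2)$ ensures the Bennett regime is valid.

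The main obstacle will be the uniform control of $B$: one has to carefully track how the Sobolev regularity of the pushforward densities (Assumption \ref{hyp:sabscont}), the $\mathcal{C}^2$ smoothness of $\phi$ (Assumption \ref{hyp:phic2}) and the VC dimension $\V$ of $\S_0$ (Assumption \ref{hyp:VC}) combine through the chaining and variance bounds in \cite{CleLimVay21} to yield the explicit constants $C_1$, $C_2\geq 24$ and $\kappa_p=p\wedge(1-p)$ with the precise Bennett shape appearing in \eqref{eq:typeII}. Propagating these constants through the decomposition and collecting the union-bound prefactor to $18$ is otherwise routine bookkeeping.
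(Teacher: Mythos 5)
Your proposal is correct and follows essentially the same route as the paper: the same decomposition of $T_{n'}-\int_0^1\phi(u)\,\dd u$ into a test-statistic fluctuation term, an excess-risk term and the $\varepsilon-\delta$ signal, the same deterministic $O(\sqrt{\log(1/\alpha)/n'})$ bound on the quantile $q^{\phi}_{n'_-,n'_+}(\alpha)$ (the paper's Proposition~\ref{appprop:rate_quantile}), the same conditioning on $(\mathcal{D}_n,\sigma)$ to apply the single-scoring-function concentration for the first term, and the same appeal to the uniform Bennett-type bound of Theorem~5 in \cite{CleLimVay21} for the ERM term. The only deviations are bookkeeping (your split of the thresholds as $(\varepsilon-\delta)/4$ versus the paper's $(\varepsilon-\delta)/2$, and the harmless claim that $A$ is nonnegative), which affect only constants.
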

The first term results from the control of the type-II error of a univariate rank statistic. The second term  relies on Theorem 5 established in \cite{CleLimVay21}, inherited from the learning stage of the scoring function.  
If the bias $\delta$ induced by the learning step is guaranteed to be smaller that the departure $\varepsilon$ from  $\mathcal{H}_0$, such that $\varepsilon - \delta>0$,  and if this quantity is kept fixed, then both terms in \eqref{eq:typeII} converge to zero when both 
$n, \; n'\to\infty$. 
 Importantly, the error rate related to the hypothesis test is \emph{independent} on the dimensions of the spaces $\X$ and $\Y$. 
The only term dependent on those dimensions comes from the learning step, through the choice of bipartite ranking related to the class of scoring functions $\S_0$. Precisely, only the constants $C_1$ and $C_2$ depend on the dimensions of  $\X$ and $\Y$ as inherited by the {\sc VC} dimension $\V$ of $\S_0$. We illustrate this bound and its parameters ($\varepsilon, \delta$) in the context of Example \ref{exgauss} in the Suppl.  Material. \\
This result is important and new to the literature  for testing independence under nonparametric alternatives. It is, to the best of our knowledge, the first finite sample  probabilistic uniform control of the type-II error. The power of test statistics from the literature comparatively suffers from the underlying dimensions, see \cite{RamdasAAAI15}. The estimator of those statistics indeed take the form of $U$-statistics based on multivariate observations,  for which it has been proved to be subject to misspecification of the asymptotic distribution under nonparametric alternatives, see \cite{HXDGustat23}. Hence, our proposed method circumvents this limitation by computing the test statistic based on univariate samples that are mapped thanks to the scoring function solution of the bipartite ranking problem.

\section{NUMERICAL EXPERIMENTS}\label{sec:exp}
This section presents the empirical performance of our proposed method (Fig. \ref{fig:procindtest}), by illustrating the theoretical testing guarantees of section \ref{sec:main} through: 1) high-dimensional settings and non-monotonic class of alternatives, and 2) application to fair learning by testing for {\it statistical parity} based on real data published in \cite{jesus2022turning}. We mainly consider  synthetic datasets to exactly control the departure from independence.  We refer to the Supp. Material, Section \ref{appsec:expes} for details on the implementation and additional experiments.
These experiments can be reproduced using the Python code available  at {\small \url{https://github.com/MyrtoLimnios/independence_ranktest}}.

\paragraph{Ranking-based independence rank tests.} 
We implemented the Ranking Forest  algorithm (\texttt{rForest},  \cite{CDV13}) to solve \textit{Step 2}, following the empirical results   in \cite{CleLimVay21test}. 
We selected two score-generating functions to compute the rank statistic \eqref{eq:test} for \textit{Step 3}:
$\phi(u)=u$ (\texttt{rForest$_{MWW}$}, \cite{Wil45}) 
and $\phi(u)=u\mathbb{I}\{ u\geq u_0\}$ with  $u_0\in \{0.85, \; 0.90, \; 0.95\}$ (\texttt{rForest$_{u_0}$}, \cite{CV07})  considering only the $1-u_0$ higher ranks in the computation of the statistic corresponding to the beginning of the $\roc$ curve. 

\paragraph{Evaluation criteria and  experimental parameters.} Once all methods are calibrated for the range of significance levels $\alpha \in (0,1)$, we compare the graphs of the rate of rejecting $\mathcal{H}_0$ under $\mathcal{H}_1$, and also at fixed $\alpha=0.05$ exposed in tables in the Supp. Material. These criteria are computed over $B=100$ Monte-Carlo samplings, with $95\%$ confidence interval, and plotted against the  \textit{dependence} parameter $\rho \in \RR$, as function of the \textit{departure} level $\varepsilon \in(0,1)$, see Def. \ref{def:depart}.

\paragraph{Probabilistic model and experimental parameters.} 
We continue on  Ex. \ref{exgauss} motivated by the results in \cite{HXDGustat23} referred to as model (GL). 
Consider $(\bX, \; \bY) \sim \mathcal{N}(e_d,\Gamma_{\rho})$, where $e_{d}\in \mathbb{R}^d$ the null vector, $\text{Cov}(X^1,Y^k) = \rho$, for all $k\leq l$ and $\Gamma_{\rho,i,j} = \delta_{ij}$ otherwise.
We implement model (M1) for non-monotonic set of alternatives, wherein $X^1 = \rho \cos \Theta + \omega_1/4$, $Y^1 = \rho \sin \Theta + \omega_2/4$, with 
$\rho \in \{1, 2, 3\}$,
$\omega_i\sim \mathcal{N}(0,1)$, $i\in \{1,2\}$,  and $\Theta\sim\mathcal{U}([0,2\pi])$ all variables being independent, and with $d\in \{4, 10, 26\}$, $N\in \{500, 2000\}$. (M1) is extended for high dimension to both a sparse (M1s) and dense (M1d) models, see the Supp. Material, Section \ref{appsec:expes} therein.

The number of random permutations for our procedure is $K_p \in \{10,50\}$ under $\mathcal{H}_1$. The pooled sample size $N$ is fixed, with $n = 4N/5$ and $n' = N/5$, and set $q = l = d/2$. 

\paragraph{Benchmark tests.} We implement two state-of-the-art multivariate and nonparametric tests, 
namely the unbiased estimator of the Hilbert-Schmidt Independence Criterion (\texttt{HSIC}, \cite{Grett07ind}), with the recommended Gaussian kernel with bandwidth the median heuristic of the distance between the points in the merged sample ($\eg$ \cite{GBRSS12}), and the centered estimator of the Distance correlation computed with either the $L_1$ or the $L_2$ distances (\texttt{dCor$_{L_1}$}, \texttt{dCor$_{L_2}$}, \cite{Szekely07}). These methods require an additional implementation to estimate the null quantile, $\eg$ done by a permutation procedure. Due to their high computational complexity ($\mathcal{O}(N!)$), we restricted to a fixed number of permutations $K_0=200$. 

\paragraph{Results and discussion.}
We focus on the ability of the ranking-based method  to reject $\mathcal{H}_0$ for small dependence $\rho$ and for increasing  dimension $d$, depending on the choice of $\phi(u)$. First, the proposed method is distribution free under $\mathcal{H}_0$ for any bipartite ranking algorithm, hence its calibration
 only depends on $n_-', n_+'$, $\phi$ and $\alpha$. State-of-the-art (SoA) methods do not have this advantage in comparison. Other procedures than the implemented permutation-based one, approximate the asymptotic null distribution of the related statistics, namely using the Gamma distribution for the \texttt{HSIC}, see \cite{Grett07ind} Section 3. However, this method is proved to be  subject to misspecification under nonparametric assumptions, as proved in  \cite{HXDGustat23}, resulting in false estimation of the testing threshold and thus incorrect $p$-values. 
Notice that, for the proposed ranking-based tests, the number of random permutations $K_p$ required to estimate the product of the marginal distributions, is lower than that for the estimation of the SoA's null threshold: we propose to only sample from both $\mathfrak{S}_{n_-}$ and $\mathfrak{S}_{n'_-}$, compared to  $\mathfrak{S}_{N}$. The experiments show that  
 for a well calibrated ranking-method, one achieves high empirical power with minimal number of permutations ($K_p\in\{10, 20,  50\}$), see Fig. \ref{tab:all500} and \ref{fig:expesGLd4}.  
For small sample sizes, \texttt{RTB} is not competitive as it has lower power for increasing $u_0$: fewer observations are considered and yielding larger variance for the estimation of the rank statistic. \texttt{RTB} has, however, experimentally showed higher accuracy for estimating the beginning of the true $\roc$ curve ($\roc^*$) in \cite{CleLimVay21}. \texttt{RTB}  also achieves competitive rejection rates to \texttt{MWW} for larger $N$,
 and to SoA for models (GL, M1), see Fig. \ref{tab:all500}.  
When the  dimension increases $d\in \{N/10, N/5, N/2\}$,  fixing  $N$,  the performance of \texttt{MWW} remains high, $\eg$ Fig. \ref{tab:all500}. Notice that the data generating processes are designed not to suffer from signal-to-noise low ratio, for high dimension $d$ especially. However, there is a clear difference in the performances depending on the range of that ratio: the sparser the signal is and the smaller the rejection rates are for the SoA methods. For (GL), see Fig. \ref{fig:expesGLd4} ($d=4$) and  \ref{fig:expesGLd10} ($d=10$) especially, wherein the ratio equals to $\resp$ $1/4$ and $1/10$, \texttt{rForest} exhibits higher power for lower departures from $\cH_0$, see also (M1s) Fig. \ref{tab:M2_HDs} and plots \ref{fig:expesM1sd50}, \ref{fig:expesM1sd100}. On the contrary, for denser models, $\eg$ (M1d) Fig. \ref{tab:M2_HDd}, SoA methods have similar performances with \texttt{MWW}, however \texttt{RTB} shows no power for small departures $\rho$. The randomization related to \texttt{rForest} increases the chances to select important information, whereas for dense models, it might be ignored, see  \cite{clem13emptree} for further empirical analysis. 
Lastly,  both \texttt{HSIC} and \texttt{dCor$_{L2}$} show similar experimental performances as expected,  see  \cite{SejGrett13}.
To conclude, for all $\phi$ and $d$, the rejection rates of the ranking-based tests increase with the departure $\varepsilon$.  They empirically outperform the comparative SoA tests, studied for  non-monotonic and sparse high dimensional models.

\begin{figure}[ht!]
    %
      \begin{tabular}{ccc}
    \parbox{0.33\textwidth}{
    \includegraphics[scale=0.35]{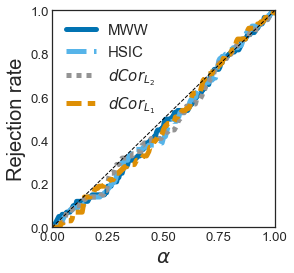}
    \subcaption{(GL), $\rho = 0.0, d=4$}
    } 
    
    \parbox{0.30\textwidth}{
    \includegraphics[scale=0.35]{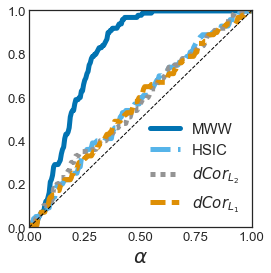}
    \subcaption{$\rho = 0.2, d=4$}
    \includegraphics[scale=0.35]{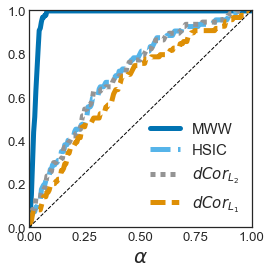}
    \subcaption{(GL), $\rho=0.4, d=4$}
    }
    \parbox{0.30\textwidth}{
    \includegraphics[scale=0.35]{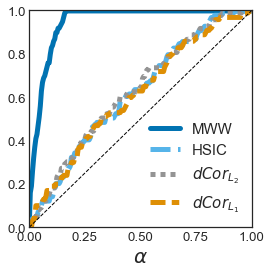}
    \subcaption{(GL), $\rho = 0.3, d=4$}
    \includegraphics[scale=0.35]{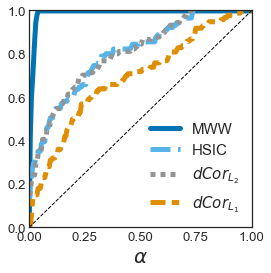}
    \subcaption{(GL), $\rho=0.5, d=4$}
    }
    \end{tabular}
    \caption{Plots of the  rejection rate under $\cH_0$ (a) and $\cH_1$ (b-e) against the significance level $\alpha\in(0,1)$ for (GL) with $\phi(u)=u$ (\texttt{rForest$_{MWW}$}), $\rho = 0.0$ (a)  $\rho = 0.2$ (b), $\rho = 0.3$ (c), $\rho = 0.4$ (d), $\rho = 0.5$ (e). The parameters are fixed to  $N=1000$, $d=4$, $K_p=10$, $K_0=200$, $B=100$ for all experiments.}
    \label{fig:expesGLd4}
    \end{figure}

    \begin{figure}[ht!]
      %
       \centering
        \begin{tabular}{cccc}
          \hspace{-0.7cm}
      \parbox{0.24\textwidth}{
      \includegraphics[scale=0.3]{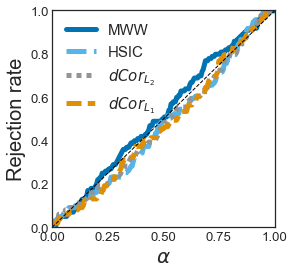}
      \subcaption{(GL),\\ $\rho = 0.0, d=10$}
      } 
      \parbox{0.23\textwidth}{
      \includegraphics[scale=0.3]{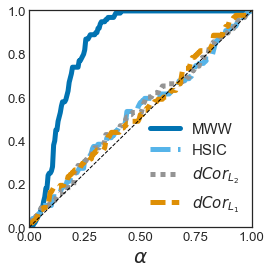}
      \subcaption{(GL),\\ $\rho = 0.10, d=10$}
      }
      \parbox{0.23\textwidth}{
      \includegraphics[scale=0.3]{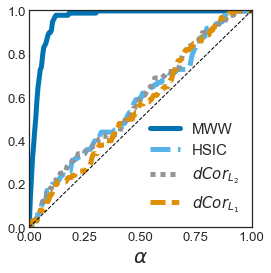}
      \subcaption{(GL),\\ $\rho = 0.15, d=10$}
      }
      \parbox{0.23\textwidth}{
      \includegraphics[scale=0.3]{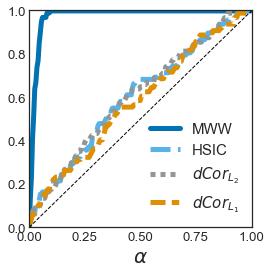}
      \subcaption{(GL),\\  $\rho=0.20, d=10$}
      }
  
      \end{tabular}
      \caption{Plots of the  rejection rate under $\cH_0$ (a) and $\cH_1$ (b-d) against the significance level $\alpha\in(0,1)$ for (GL) with $\phi(u)=u$ (\texttt{rForest$_{MWW}$}), $\rho = 0.0$ (a)  $\rho = 0.10$ (b), $\rho = 0.15$ (c), $\rho = 0.20$ (d). The parameters are fixed to  $N=1000$, $d=10$, $K_p=10$, $K_0=200$, $B=100$ for all experiments.}
      \label{fig:expesGLd10}
      \end{figure}

\paragraph{Interpretation of the null assumption rejection.} We recall that certain bipartite ranking algorithms, such as those proposed in \cite{CDV09} or \cite{CDV13}, produce scoring functions that can be interpreted to a certain extent. As explained in section 5 of \cite{CDV09}, the \textit{relative importance} of each component of the argument $(X,Y)$ of a scoring function $s(x,y)$ defined by a `ranking tree' (or by a `forest of ranking trees') can be easily quantified. When applied to the testing problem considered here, this interpretability tool may permit to identify the components mainly responsible for the departure from the independence assumption (or equivalently the departure of the $\roc$ curve from the diagonal) possibly assessed from the data by means of the methodology we promote. We further refer to similar discussions on interpretability of the learned decision rule in the context of two-sample testing, when formulated as a classification learning problem in $\eg$ \cite{lopez17iclr,kubler22pmlr}.

\paragraph{Real data experiment: testing for statistical parity.}
 In the context of `responsible' statistical prediction, a significant number of  works have studied {\it fair} statistical methods, aiming to be unbiased/fair {\it wrt.} {\it protected} attributes/ subgroups considered as sensitive. In particular,
 {\it Statistical parity} is achieved when a decision rule producing a set of outcomes $\bX$ based on an ensemble of covariates $\bZ$, is independent of a set of protected attributes $\bY$. 
 We propose to test for statistical parity formulated as a test for independence between $\bX$ and $\bY$ as in \eqref{eq:problem}.
 If $\bX$ is univariate and discrete, typical methods in fairness propose to learn a classification model to predict $\bX$, wherein both $(\bX, \; \bZ)$ are used, and then to measure or test for statistical independence between the predicted $\bX$ and the protected attributes $\bY$, see $\eg$ \cite{fermanianFair}. 
 We propose to apply our proposed method in that context, to assess whether a typical algorithm learns to predict the outcome under statistical parity, when  both outcomes $\bX$ and protected variables $\bY$ are continuous and valued in spaces of possibly dimensions $q,l>1$. We use the synthetic Bank Account Fraud (BAF) dataset developed by \cite{jesus2022turning}, and generated from real datasets of  frauds in anonymized  bank account openings.  
 BAF has $31$ explanatory variables plus one indicating the possible occurrence of fraud. It has unbalanced representation of frauds and all features can be modeled as continuous observations. 
 We selected three potentially protected variables related to the personal identity of the clients, namely the age of the client (\texttt{Age}), an indicator level of similarity between  the name of the client and personal email address (\texttt{Name}), and the number of emails received for applicants with same date of birth four weeks prior to fraud (\texttt{Date}). 
 We gather the distributions of the empirical $p$-values in Fig. \ref{fig:fraud}, based on a $5$-fold cross-validation. For each fold, a Random Forest algorithm is trained to predict the probability of \texttt{Fraud} $\bX$, and our ranking-based procedure (Fig. \ref{fig:procindtest}) is used  to estimate the associate $p$-value of \eqref{eq:problem}. We subsampled at random from the original data set $N=10^3$ while keeping the proportion of \texttt{Fraud} from the original dataset fixed. This plot shows that we cannot reject at level $\alpha = 0.05$ the statistical independence between the predicted probability of fraud and the protected variables $\bY$. 

 \begin{figure}[ht!]
      \centering
    \includegraphics[scale=0.45]{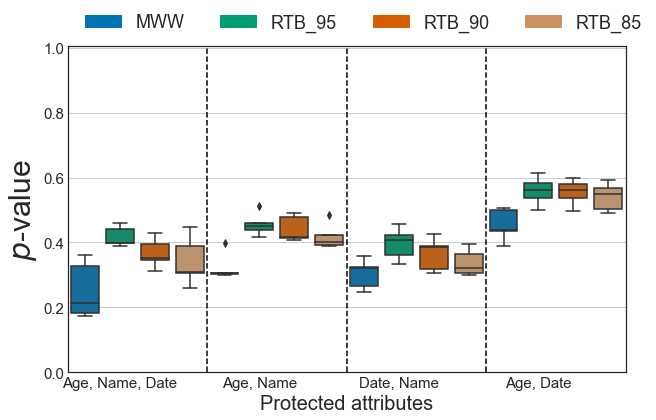}
    \caption{Boxplots of the $p$-values for different sets of protected attributes. The experimental parameters are fixed to $N=10^3$, $K_p =10$, $q=1, l\in \{2,3\}$, $5$-fold cross-validation, $31$ features, based on the open-source dataset available \cite{jesus2022turning}.} 
    \label{fig:fraud}
\end{figure}

\section{CONCLUSION}\label{sec:conclusion}
We have proposed a novel approach, involving a preliminary bipartite ranking stage, to test independence between random variables in a nonparametric and possibly high-dimensional setting. Nonasymptotic error bounds have been established for this method, and its theoretical optimality properties are confirmed by numerical experiments, showing that it generally detects small departures from the independence much better than its competitors and  resists to the high dimension especially in sparse settings.

\subsubsection*{Acknowledgements}
We  thank the reviewers for the useful comments. Myrto Limnios was supported by Novo Nordisk Foundation Grant NNF20OC0062897.

\vskip 0.2in
\bibliography{Ref_independence}
\bibliographystyle{plain}

\appendix

\newpage

This material supplements the article {\it On Ranking-based Tests of Independence}. Section \ref{appsec:preliminaries} gathers additional properties on $\roc$ analysis,  and on the assumptions required to prove the main Theorem 2, as well as developing Example 1. In Section \ref{appsec:proofs}, we derive the detailed proofs for all theoretical results stated in the main corpus of the article. Finally, section \ref{appsec:expes} presents additional numerical experiments on synthetic data. Importantly, we  prove the nonasymptotic control of both type-I and type-II errors formulated as concentration inequalities and show empirical evidence for the competitiveness of our proposed method. 
We first recall the proposed ranking-based rank test of independence procedure in Figure \ref{appfig:proc} for the sake of clarity.

\begin{figure}[ht!]
  \fbox{\begin{minipage}{\textwidth} 
      \medskip
  \begin{center}
  {\large \textbf{Ranking-based Independence Testing}}
  \end{center}
  \medskip

\hspace{0.5cm}{\bf Input.} Collection of $N\geq 1$ $\iid$ copies $\mathcal{D}_N=\{(\bX_1,\bY_1)\; \ldots,\; (\bX_N,\bY_N)\}$ of $(\bX,\bY)$;  subsample sizes $n=n_++n_-<N$ and $n' = N-n=n'_++n'_-$; bipartite ranking $\mathcal{A}$ algorithm operating on the class $\S$ of scoring functions on $\mathcal{X}\times \mathcal{Y}$; score-generating function $\phi$; target level  $\alpha\in (0,1)$; quantile $q^{\phi}_{n'_-,n'_+}(\alpha)$.

      \medskip
  \begin{enumerate}		
       
      \item {\bf Splitting and  Shuffling.} Divide the initial sample into two subsamples $\mathcal{D}_N = \mathcal{D}_{n} \cup \mathcal{D}_{n'}'$.
      
      Independently of the $(\bX_i,\bY_i)'s$, draw uniformly at random two independent permutations $\sigma$ and $\sigma'$ in $\mathfrak{S}_{n_-}$ and $\mathfrak{S}_{n'_-}$ respectively, in order to build the independent samples: 
      $  \mathcal{D}^-_{n_-}=\{(\bX_i, \bY_{\sigma(i)})_{ 1\leq i\leq  n_-}\},\quad \mathcal{D}^+_{n_+}=\{(\bX_i, \bY_{i})_{ 1+n_-\leq i\leq  n}\}$, and 
      $\mathcal{D}'^{-}_{n'_-}=\{(\bX_i, \bY_{n+\sigma'(i-n)})_{ 1+n\leq i\leq n+ n'_-}\} , \quad \mathcal{D}'^{+}_{n'_+}=\{(\bX_i, \bY_{i})_{ 1+n+n'_-\leq i\leq  N}\}$.
      
          \item {\bf Bipartite Ranking.} 
 Run the bipartite ranking algorithm $\mathcal{A}$ based on the pooled training dataset $  \mathcal{D}_n=\mathcal{D}^-_{n_-}\cup \mathcal{D}^+_{n_+}$ 
 built at the previous step, in order to learn the scoring function 
          \begin{equation}\label{eq:step1}
          \hat{s}
          =\mathcal{A}(\mathcal{D}_n )~.
          \end{equation}
          
          \item {\bf Scoring and Two-sample Rank Statistic.} Build the univariate positive/negative subsamples using the scoring function $\hat{s}$ learned at the previous step   $\{\hat{s}(\bX_{n+1}, \bY_{n+\sigma'(1)}), \ldots,\hat{s}(\bX_{n+n'_-}, \bY_{n+\sigma'(n'_-)})\}$
              and $ \{\hat{s}(\bX_{n+n'_-+1},\bY_{n+n'_-+1}), \ldots, \hat{s}(\bX_N, \bY_N)\}$.
          Sort them by decreasing order to compute 
          
          \begin{equation}\label{supeq:step2}
             \widehat{W}^{\phi}_{n'_-,n'_+}(\hat{s}, \sigma')=\sum_{i=1+n+n'_-}^N\phi\left( \frac{R'_{\sigma'}\left(\hat{s}(\bX_i,\bY_i)\right)}{n'+1} \right)~,
             \end{equation}
  where  $$R'_{\sigma'}(t) = \sum_{i=1+n+n'_-}^N \mathbb{I}\{\hat{s}(\bX_i,\bY_i) \leq t\}+ \sum_{i=1+n}^{n+n'_-} \mathbb{I}\{\hat{s}(\bX_i,\bY_{n+\sigma'(i-n)}) \leq t\}~.$$
          \end{enumerate}
         \hspace{0.5cm} {\bf Output.} Compute the outcome of the test of level $\alpha$ based on the test statistic \eqref{eq:step2}: accept the hypothesis $\cH_0$ of independence if:
    $$
    \frac{1}{n'_+}\widehat{W}^{\phi}_{n'_-,n'_+}(\hat{s}, \sigma')\leq \int_0^1\phi(u)\dd u  +  q^{\phi}_{n'_-,n'_+}(\alpha) ~, \quad \text{and reject it otherwise.}
    $$ 
  \end{minipage} }
  \caption{Ranking-based independence testing.}\label{appfig:proc}
\end{figure}

\section{PRELIMINARIES}\label{appsec:preliminaries}

This subsection gathers additional definitions and properties important to the main corpus. We first expose results related to $\roc$ analysis. Then, we provide complementary material to Assumptions (1-3) required to prove the main Theorem 2, deriving the nonasymptotic uniform bound of the type-II error of the test statistic based on Eq. \eqref{eq:step2}. We consider  same notations as in the main corpus of the article. 

\subsection{{\rm ROC} analysis}
\begin{lemma} (\cite{CV09ieee}) Let $\bZ$ denote either $\bX_+$ or $\bX_-$, and define the likelihood ratio $\Psi(z)=\dd F_{+} / \dd F_{-}(z) $.  The property below holds true a.s. 
  \begin{equation*}
  \Psi(\bZ)=\frac{\dd F_{\Psi, +}}{\dd F_{\Psi,-}}(\Psi(\bZ))~,
  \end{equation*}
where $F_{\Psi, +}$ ($\resp$ $F_{\Psi, -}$) is the pushforward distribution $F_+$ ($\resp$ $F_{-}$) by the likelihood ratio.
\end{lemma}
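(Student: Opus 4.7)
The plan is to prove the stronger pointwise identity that $\dd F_{\Psi,+}/\dd F_{\Psi,-}(t)=t$ for $F_{\Psi,-}$-almost every $t\in\RR$, and then evaluate at $t=\Psi(\bZ)$, taking care that the exceptional null set pulls back to a null set under both $F_+$ and $F_-$.

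First, I would invoke the pushforward change of variables: for any bounded Borel $g:\RR\to\RR$,
\begin{equation*}
\int g(t)\,\dd F_{\Psi,+}(t)=\int g(\Psi(z))\,\dd F_+(z).
\end{equation*}
Since $\Psi=\dd F_+/\dd F_-$ (in particular $F_+\ll F_-$), the Radon--Nikodym identity rewrites the right-hand side as $\int g(\Psi(z))\Psi(z)\,\dd F_-(z)$, which in turn, again by the pushforward formula applied to the bounded Borel map $t\mapsto tg(t)$, equals $\int g(t)\,t\,\dd F_{\Psi,-}(t)$. Since $g$ was arbitrary, the uniqueness of the Radon--Nikodym derivative yields
\begin{equation*}
\frac{\dd F_{\Psi,+}}{\dd F_{\Psi,-}}(t)=t\quad\text{for $F_{\Psi,-}$-a.e. } t.
\end{equation*}

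To conclude the lemma, let $N\subset\RR$ denote a $F_{\Psi,-}$-null set outside of which the above identity holds. By definition of the pushforward, $\Psi^{-1}(N)$ is $F_-$-null; since $F_+\ll F_-$ it is also $F_+$-null. Hence whether $\bZ=\bX_+$ or $\bZ=\bX_-$, we have $\Psi(\bZ)\notin N$ almost surely, and therefore
\begin{equation*}
\frac{\dd F_{\Psi,+}}{\dd F_{\Psi,-}}(\Psi(\bZ))=\Psi(\bZ)\quad \text{a.s.}
\end{equation*}

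The argument is essentially a two-line computation; the only subtle point, and the one I would flag as the main (very mild) obstacle, is this bookkeeping of null sets: because the claim must hold almost surely simultaneously under $F_+$ and $F_-$, the absolute continuity $F_+\ll F_-$ implicit in the very definition of $\Psi=\dd F_+/\dd F_-$ is what guarantees the exceptional set remains negligible under both laws.
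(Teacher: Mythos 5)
Your argument is correct. Note that the paper itself gives no proof of this lemma: it is recalled verbatim from \cite{CV09ieee} in the supplementary preliminaries, so there is nothing to compare against line by line. Your derivation is the standard one for this fact (the change of variables for the pushforward, the Radon--Nikodym identity $\dd F_+=\Psi\,\dd F_-$, and a second change of variables to identify $\dd F_{\Psi,+}(t)=t\,\dd F_{\Psi,-}(t)$), and your handling of the exceptional null set --- pulling $N$ back through $\Psi$ and using $F_+\ll F_-$, which is implicit in the very definition of $\Psi$, to cover both the case $\bZ=\bX_+$ and $\bZ=\bX_-$ --- is exactly the point that makes the ``a.s.'' statement valid under either law. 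One negligible remark: the map $t\mapsto tg(t)$ need not be bounded, but the pushforward formula still applies since $(g\circ\Psi)\cdot\Psi$ is $F_-$-integrable (its integral is bounded by $\lVert g\rVert_\infty$), so no repair is needed.
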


\begin{proposition}(\cite{CV09ieee})\label{prop:pteROC} For any probability distributions $ F_{+} $ and $ F_{-} $, and any scoring function $s:\mathcal{Z}\to\RR$, the following assertions hold true.
\begin{enumerate}
  \item[(i)] $\roc(s, 0)=0$ and  $\roc(s, 1)=1$.
   \item[(ii)] The $\roc$ curve is invariant by any  nondecreasing transform   $c: \RR\to \RR$ of a scoring function $s(z)$ on $(0,1)$: $\roc(c\circ s, \cdot) = \roc(s, \cdot)$.
    \item[(iii)] Let a scoring function $s(z)$. Suppose both distributions $ F_{+} $ and $ F_{-} $ are continuous. Then, the associated $\roc$ curve of the function $s(z)$ is  differentiable $\IFF$ the pushforward distributions $F_{s,+}$ and $F_{s,-}$ are continuous. 
\end{enumerate}
\end{proposition}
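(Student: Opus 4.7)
All three items follow from unwinding the definition $\roc(s,u) = 1 - F_{s,+} \circ F_{s,-}^{-1}(1-u)$ combined with the convention that jumps are connected by line segments. For (i), I would evaluate at the endpoints using $W^{-1}(u) = \inf\{t: W(t) \geq u\}$: at $u=0$, $F_{s,-}^{-1}(1)$ is the right endpoint of $\supp F_{s,-}$ (possibly $+\infty$), so $F_{s,+}(F_{s,-}^{-1}(1)) = 1$ and $\roc(s,0)=0$; symmetrically $F_{s,-}^{-1}(0) = -\infty$ and $\roc(s,1)=1$. Any line segments inserted at the endpoints must terminate at $(0,0)$ and $(1,1)$, so the segment-filling convention preserves these boundary values.

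For (ii), I would use the parametric description: the ROC curve is the completion in $[0,1]^2$ of $\{(1-F_{s,-}(t), 1-F_{s,+}(t)): t \in \RR\}$ by line segments across jumps. For nondecreasing $c$, the change of variable $u = c(t)$ shows that the parametric set for $c \circ s$ coincides with that for $s$: whenever $c$ is strictly increasing at $t$ the events $\{s(\bX_\epsilon) > t\}$ and $\{(c\circ s)(\bX_\epsilon) > c(t)\}$ agree up to null sets, while intervals on which $c$ is constant contribute only redundant parameter values traversing points already in the image for $s$. Identical parametric images with identical jump structure then yield identical completed curves.

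For (iii), I would treat the two implications separately. If $F_{s,-}$ has a jump of size $\Delta > 0$ at some $t_0$, the parametric curve undergoes a horizontal jump of length $\Delta$; filling by a horizontal segment whose endpoints join strictly monotone arcs creates corners where left and right derivatives disagree, contradicting differentiability (symmetric argument for $F_{s,+}$). Conversely, continuity of both pushforwards eliminates all jumps, so no segments are inserted and $\roc(s,\cdot)$ reduces to the continuous composition $1 - F_{s,+} \circ F_{s,-}^{-1}(1-\cdot)$, whose differentiability stems from the regularity of compositions of continuous monotone functions. The main obstacle lies in this last part: one must carefully track how $F_{s,-}^{-1}$ behaves when $F_{s,-}$ has flat regions (the mirror image of jumps), since flat intervals of $F_{s,-}$ correspond to jumps of $F_{s,-}^{-1}$ and hence to potential jumps of $\roc(s,\cdot)$ that must be reconciled with the filling convention. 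This bookkeeping between flat pieces and jumps is where the technical care concentrates and where hidden traps are most likely to appear.
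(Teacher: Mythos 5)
You should first note that the paper contains no proof of this proposition at all: it is recalled verbatim from \cite{CV09ieee} as background material, so there is no in-paper argument to match, and your "unwind the definition" route is the natural one. However, two of your steps fail. In (ii), the treatment of intervals on which $c$ is constant is backwards: collapsing a threshold interval does not merely add ``redundant parameter values'', it \emph{removes} operating points --- every point $(1-F_{s,-}(t),\,1-F_{s,+}(t))$ with $t$ interior to that interval is no longer attainable by thresholding $c\circ s$, and the jump-filling convention replaces the corresponding arc by a straight chord. The two curves therefore differ whenever that arc is not already a line segment (take $c$ constant on all of $\RR$: the ROC curve of $c\circ s$ is the main diagonal no matter what $s$ is). The invariance property genuinely requires $c$ to be strictly increasing, which is the form in which it is established in \cite{CV09ieee}; your argument is only valid in that case.

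In (iii), the converse implication is not established: ``differentiability stems from the regularity of compositions of continuous monotone functions'' is not a proof and is false as stated --- a continuous, even strictly increasing, c.d.f.\ need not be differentiable (a piecewise-linear c.d.f.\ with a kink, or a continuous singular strictly increasing one), so absence of atoms and flats does not make $u\mapsto 1-F_{s,+}\circ F_{s,-}^{-1}(1-u)$ differentiable. Continuity of the pushforwards yields continuity of the ROC curve; differentiability requires existence of densities and regularity of the quantile transform (compare Assumption 2 of the paper, which imposes differentiable densities on the pushforwards precisely for this reason). The flat-vs-jump bookkeeping you flag as the main obstacle is real but is not where the gap lies. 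In the forward direction you also assume the filled horizontal segment abuts ``strictly monotone arcs''; nothing in the hypotheses guarantees this (if $F_{s,+}=F_{s,-}$ has an atom, the filled segment lies along the diagonal and no corner is created), so that step needs the additional conditions under which \cite{CV09ieee} states the property. Finally, a small slip in (i): that $F_{s,-}^{-1}(1)$ is the right endpoint of $\supp F_{s,-}$ does not imply $F_{s,+}\bigl(F_{s,-}^{-1}(1)\bigr)=1$ when $F_{s,+}$ charges values above $\supp F_{s,-}$; the endpoint values $(0,0)$ and $(1,1)$ come from the limits of the PP-plot as $t\to\pm\infty$ together with the segment convention, not from that evaluation.
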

\subsection{Sobolev and \vc-classes of functions}

\paragraph{Sobolev space of functions.} Assumption 2 requires that for all $s\in \S_0$, the pushforward distributions of $F$ and $H\otimes G$ by the mapping $s(x,y)$ are continuous, with density functions that are twice differentiable and have Sobolev $\mathcal{W}^{2,\infty}$-norms 
bounded by  a finite constant $M>0$. 

We recall that the Sobolev space $\mathcal{W}^{2,\infty}$ is composed of  all Borelian functions $f:\mathbb{R}\rightarrow \mathbb{R}$, such that $f$ and its first and second order weak derivatives $f'$ and $f''$ are bounded almost-everywhere. It is a Banach space when equipped with the norm $\vert\vert f\vert\vert_{2,\infty}=\max\{\vert\vert f\vert\vert_{\infty},\; \vert\vert f'\vert\vert_{\infty},\; \vert\vert f''\vert\vert_{\infty}   \}$, where $\vert\vert.\vert\vert_{\infty}$ is the norm of the Lebesgue space $L_{\infty}$ of Borelian and essentially bounded functions.

\paragraph{\vc-type classes of functions.} We recall below the definition of \vc-type class of functions formulated in Assumption 3. We further refer to \cite{vdVWell96}, Chapter 2.6. therein, for additional  generalizations, details and examples.
\begin{definition}\label{def:VCclass}
A class $\mathcal{F}$ of real-valued functions defined on a measurable space $\Z$ is a bounded \vc-type class with parameter $(A,\V)\in (0,\; +\infty)^2$ and constant envelope $L_{\mathcal{F}}>0$ if for all $\varepsilon\in (0,1)$:
\begin{equation}\label{eq:covnumb}
\underset{Q}{\sup} \; N(\mathcal{F},L_2(Q),\varepsilon L_{\mathcal{F}})\leq \left(\frac{A}{\varepsilon} \right)^{\V}~,
\end{equation}
where the supremum is taken over all probability measures $Q$ on $\Z$ and the smallest number of $L_2(Q)$-balls of radius less than $\varepsilon$ required to cover class $\mathcal{F}$ ($\ie$ covering number) is meant by $N(\mathcal{F},L_2(Q),\varepsilon)$.
\end{definition}

In particular, a bounded {\sc VC} class of functions with finite {\sc VC} dimension $V$ is of {\sc VC}-type, with  $\V=2(V-1)$ and $A=(cV(16e)^V)^{1/(2(V-1))}$, where $c$ is a universal constant, see $\eg$ \cite{vdVWell96}, Theorem 2.6.7 therein.

\subsection{Multivariate Gaussian framework - Example  1 continued}\label{appssec:gaussexample}
This section extends Example 1, $\ie$,  for testing independence between two  multivariate  Gaussian $\rv$. 
We focus on deriving the explicit constants appearing in the bound that are related to: (i) the testing problem through the departure from the null $\varepsilon>0$, and the bias $\delta>0$, and (ii) the complexity of the selected class of scoring functions $\S_0$ (Assumption 3).

\paragraph{Framework and procedure.}
Consider a centered Gaussian $\rv$ $(\bX,\; \bY)$ with definite positive covariance $\Gamma$, valued in $\mathbb{R}^{q+l}$. Denote by $\Gamma_{\bX}$ and $\Gamma_{\bY}$ the (definite positive) covariance matrices of the components $\bX$ and $\bY$. The oracle class of scoring functions $\S^*$ is composed of the increasing transforms of the likelihood ratio, taking the form of the quadratic scoring function:
\begin{equation*}
s : z\in \mathbb{R}^{q+l}\mapsto   z^t (\Gamma^{-1}-\text{diag}(\Gamma_{\bX}^{-1}, \Gamma_{\bY}^{-1}) )z~.
\end{equation*}
and define $\theta^* = \Gamma^{-1}-\text{diag}(\Gamma_{\bX}^{-1}, \Gamma_{\bY}^{-1})$. Following the procedure summarized in Figure \ref{appfig:proc},  we thus propose to solve {\it Step 2} by learning the optimal scoring function in the class:
$$ \S_0(\Theta) = \{  s_{\theta}:z\in \mathbb{R}^{q+l}\mapsto   z^t \theta z, \quad \theta \in \Theta\}~,$$
where $\Theta$ is a subset of real definite positive matrices of size $\mathbb{R}^{(q+l)\times (q+l)}$. 
Notice that, for any $\rv$ $\bZ$ drawn either from $H\otimes G$ or $F$, the $\rv$ $s_{\theta}(\bZ)$ for any $\theta \in \Theta$, being  a quadratic transform of multivariate Gaussian $\rv$, is a weighted sum of $\chi^2(1)$ $\rv$.   

\paragraph{{\sc VC} dimension of $\S_0(\Theta)$.} We analyze the {\sc VC} dimension of the class $\S_0(\Theta)$ to obtain explicit relations of the constants appearing in Theorem 2 with the dimensions of the spaces $\X$ and $\Y$. Notice that, 
\begin{equation*}
  \S_0(\Theta) =  \{  s_{\theta}:z\in \mathbb{R}^{q+l}\mapsto \langle \theta,   z  z^t\rangle_F, \quad \theta \in \Theta\} 
\end{equation*}
where $\langle\cdot,\cdot\rangle_F$ is the Frobenius inner product in $\mathbb{R}^{(q+l)\times (q+l)}$. This yields  the collection of subgraphs taking the form:
\begin{equation*}
  \{\{ (z, t) \in \RR^{(q+l)}\times \RR \mapsto \langle \theta,   z  z^t\rangle_F > t\}, \quad \theta \in \Theta\}~. 
\end{equation*}
It is a \vc-class of functions by  recognizing linear separator for matrix networks (taking the sign function), where $\theta$ is definite positive thus of full rank, the {\sc VC} dimension can be  upperbounded by $c(q+l)^2$, with $c=2\log(24)$ constant. We refer to \cite{KhavariTensor21}, Theorem 2 therein, stating general upperbounds applied to tensor networks. Therefore, the constants involved in Definition \ref{def:VCclass} for the class $\S_0$ are: $A = (cV(16e)^V)^{1/(2(V-1))}$, with $V = c(q+l)^2 $. 
Applying the permanence properties proved in \cite{CleLimVay21}, all resulting classes of functions implied in the analysis of the $R$-statistic in {\it Step 2} are therefore {\sc VC} bounded and of parameters depending similarly to those of the basis class $\S_0$, see Lemma 14,19,20 in particular.
By Proposition 2.1 \cite{GineGuill01}, we can see that the dominant constant $C_1$ appearing in Theorem 2, as function of the parameters of the class $\S_0$, is a linear combination of $V$ and $V^2$, while $C_2$ is a linear combination of $V$ and $\sqrt{V}$.

\paragraph{Definition 1: interpretation of the alternative hypothesis $\cH_1$.} 
Notice that for any distribution $H\otimes G$ and $F$, $\ie$ not necessarily Gaussian, choosing the score-generating function $\phi(u)=u$ trivially leads to $\cH_1(\varepsilon):\quad \auc (s) - 1/2 \geq \varepsilon/(1-p)$. The deviation from the null hypothesis thus depends linearly on $\varepsilon$.

\paragraph{Definition 2: bipartite ranking bias.} In this setting $\delta=0$ as $\S_0(\Theta)\subset \S^*$.

\subsection{Nonlinear Dependence - Example 2}\label{exgumbel} 
\cite{Gumbel60} proposed a construction of dependent  absolute continuous univariate $\rv$ that allows for larger class of alternative hypotheses. Let $X, \; Y$ of $\resp$ distribution functions $h(\dd x)$ and $g(\dd y)$, the class of joint distributions indexed by the dependence parameter $\rho\in[-1,1]$ can be defined by $
  f_{\rho}(x,y) = h(x)g(y)(1+\rho (2H(x)-1)(2G(y)-1))$, 
  yielding the explicit oracle class $\mathcal{S}^*$ by noticing that 
  $ \Psi_{\rho} (x, y) = \rho (2H(x)-1)(2G(y)-1)$.

\section{TECHNICAL PROOFS}\label{appsec:proofs}
\subsection{Proof of Theorem  1}
The equivalence between assertions $(i)$ and $(ii)$ results from Corollary 7 in \cite{CDV09}, applied to the pair $(H\otimes G, \; F)$ and combined with the equality $\roc^*(\cdot)=\roc(\Psi,\; \cdot)=\roc(s^*,\; \cdot)$ for any $s^* \in \S^*$ by Proposition \ref{prop:pteROC}. 
One establishes the  remaining equivalences by using Equation (4). 
\subsection{Proof of Proposition 1 }
Assertions $(i)$ and $(ii)$ are inherent to the construction of the subsamples by mutual independence of all random pairs $(\bX_i, \; \bY_i)$s, and their independence with the permutations $\sigma, \; \sigma'$ independently drawn at random from $\mathfrak{S}_{n_-}$ and $\mathfrak{S}_{n'_-}$.
\subsection{Proof of Proposition 2 }
Let $\alpha\in (0,1)$,  $1\leq n_-'< n'$  and $1\leq \; n_+'< n'$. 
Consider a scoring function $\hat{s}=\mathcal{A}(\mathcal{D}^-_{n_-}, \mathcal{D}^+_{n_+})$ solution of \textit{Step 2}, see Fig. \ref{appfig:proc}. By Proposition 1, $\hat{s}$ is independent of both $\mathcal{D}_{n_-'}^{'-}$ and $ \mathcal{D}_{n_+'}^{'+}$,  hence conditioning on the subsample $\mathcal{D}^-_{n_-} \cup \mathcal{D}^+_{n_+}$  under the null hypothesis yields  \textit{a.s.}:
\begin{multline*}
\mathbb{P}_{\mathcal{H}_0}\left\{ \Phi_{\alpha}^{\phi}=+ 1  \bigm\vert  \mathcal{D}^-_{n_-} \cup  \mathcal{D}^+_{n_+}\right\} 
= \mathbb{P}_{\mathcal{H}_0}\left\{\frac{1}{n_+'}\widehat{W}^{\phi}_{n_-',n_+'}(\widehat{s}, \sigma')>\int_0^1\phi(u)\dd u \right.\\
\left. +q^{\phi}_{n'_-,n'_+}(\alpha)   \bigm\vert  \mathcal{D}^-_{n_-} \cup  \mathcal{D}^+_{n_+}\right\} \leq \alpha~,
\end{multline*}
where the first equality holds true by definition of the test statistic. The inequality results from the definition of the $(1-\alpha)$-quantile $q^{\phi}_{n'_-,n'_+}(\alpha)$ of the pushforward distribution of $\mathcal{L}^{\phi}_{n'_-,n'_+}$, by the mapping $w\mapsto (1/n')w-\int_0^1\phi(u)\dd u$,  depending only on $\phi$, $n'_+$ and $n'_-$. Then taking the expectation $\wrt$ $\mathcal{D}^-_{n_-} \cup  \mathcal{D}^+_{n_+}$  concludes the proof.
\subsection{Proof of Theorem 2}\label{supsec:proofth} 
Let $\alpha\in(0,1),\;  \varepsilon>0,\;  \delta>0$, and consider a scoring function $\hat{s}=\mathcal{A}(\mathcal{D}^-_{n_-}, \mathcal{D}^+_{n_+})\in\S_0$, solution of the  bipartite ranking step  (\textit{Step 2}) when formulated as the maximization of the empirical $W_{\phi}$-performance criterion over the class $\mathcal{S}_0$, see Fig. \ref{appfig:proc}. Observe that for all  
alternatives $(H\otimes G,\; F)$ in  $\mathcal{H}_1(\varepsilon) \cap  \mathcal{B}_1(\delta)$, the deviation of the rank statistic from the null decomposes \textit{a.s.} as:
\begin{multline}\label{appeq:decompdevi}
  \frac{1}{n_+'}\widehat{W}^{\phi}_{n_-',n_+'}(\widehat{s}, \sigma')-\int_{0}^1\phi(u)\dd u = \left\{ \frac{1}{n_+'}\widehat{W}^{\phi}_{n_-',n_+'}(\widehat{s}, \sigma') - W_{\phi}(\widehat{s})  \right\} \\
  - \left\{  W_{\phi}^* -  W_{\phi}(\widehat{s})\right\} +
  \left\{ W_{\phi}^*-\int_{0}^1\phi(u)\dd u \right\}~,
\end{multline}
and the generalization deviation of the $W_{\phi}$-performance criterion satisfies, by Definition 2: 
\begin{equation}\label{appeq:gen}
W_{\phi}^* -  W_{\phi}(\widehat{s})\leq 2 \sup_{s\in \S_0}\left\vert \frac{1}{n_+} \widehat{W}_{n_-,n_+}(s, \sigma)-W_{\phi}(s)\right\vert + \delta~.
\end{equation}
We can bound the type-II error on the samples $\mathcal{D}^{'-}_{n_-'}\cup\mathcal{D}^{'+}_{n_+'}$ as follows:
\begin{multline}\label{eq:dec2}
\mathbb{P}_{H,G}\left\{ \Phi^{\phi}_{\alpha} =0  \right\}
= \mathbb{P}_{H,G}\left\{\frac{1}{n_+'}\widehat{W}^{\phi}_{n_-',n_+'}(\widehat{s}, \sigma')-\int_{0}^1\phi(u)\dd u \leq q_{n_-',n_+'}^{\phi}(\alpha)   \right\} \\
\leq \mathbb{P}_{H,G}\left\{  2 \sup_{s\in \S_0}\left\vert \frac{1}{n_+} \widehat{W}_{n_-,n_+}(s, \sigma)-W_{\phi}(s)\right\vert + \left\vert \frac{1}{n_+'} \widehat{W}^{\phi}_{n_-',n_+'}(\widehat{s}, \sigma')-W_{\phi}(\widehat{s}) \right\vert \right.\\
 \left. \geq 
\varepsilon - \delta- \sqrt{\frac{\log(18/\alpha)}{Cn'}} \right\} 
\end{multline}
where $C=8^{-1}\min\left(p/\lVert \phi \rVert_{\infty}^2,  (p \lVert \phi' \rVert_{\infty}^2)^{-1}, ((1-p) \lVert \phi' \rVert_{\infty}^2)^{-1} \right) $ and as soon as $n'\geq 4\log(18/\alpha)/(C(\varepsilon-\delta)^2)$. We sequentially used Eq. \eqref{appeq:decompdevi} and \eqref{appeq:gen},  and Proposition \ref{appprop:rate_quantile} to upperbound the quantile applied to samples of sizes  $n_+', \; n_-'$, proved in section  \ref{appaddproofs}. 

We now apply Theorem 5 in \cite{CleLimVay21} to bound the uniform deviation of the $W_{\phi}$-ranking performance criterion to its estimator based on the two-samples $\mathcal{D}^-_{n_-} \cup  \mathcal{D}^+_{n_+}$, 
such that for all $n \geq 16 C_1^2/ (p (\varepsilon-\delta)^2)$:
\begin{multline}\label{eq:2part}
\mathbb{P}_{H,G}\left\{  2 \sup_{s\in \S_0}\left\vert\frac{1}{n_+}\widehat{W}_{n_-,n_+}(s, \sigma)-W_{\phi}(s)\right\vert   \geq  \frac{\varepsilon - \delta}{2}  \right\} \\
\leq C_2
\exp\left\{ -\frac{np(p\wedge  (1-p))}{4C_2}( \varepsilon - \delta) \log \left(1+\frac{\varepsilon-\delta}{16C_1(p\wedge  (1-p))}\right) \right\}~,
\end{multline}
as soon as $n  \geq 16C_1^2/(p(\varepsilon - \delta)^2)$, constants $C_1>0, \; C_2 \geq 24$ depend on $\phi, \; \V$ 
of values detailed in the dedicated proof, see \cite{CleLimVay21}, Appendix section B.3 therein. 

We can now upperbound the deviation of the two-sample rank statistic $\wrt$ the $W_{\phi}$-ranking performance criterion by conditioning on the first subsample $\mathcal{D}_n= \mathcal{D}^-_{n_-}\cup \mathcal{D}^+_{n_+}$ and applying the inequality \eqref{appeq:bound1Drstat},  to the two independent samples:
$$ 
\{\hat{s}(\bX_{n+1}, \bY_{n+\sigma'(1)}), \; \ldots,\; \hat{s}(\bX_{n+n'_-}, \bY_{n+\sigma'(n'_-)})\}$$
              $$ \cup \{\hat{s}(\bX_{n+n'_-+1},\bY_{n+n'_-+1}), \; \ldots, \; \hat{s}(\bX_N, \bY_N)\}
$$
\begin{multline}\label{eq:1part}
 \mathbb{P}_{H,G}\left\{  \left\vert \frac{1}{n_+'} \widehat{W}^{\phi}_{n_-',n_+'}(\widehat{s}, \sigma')-W_{\phi}(\widehat{s}) \right\vert   \geq \frac{\varepsilon - \delta}{2}  -  \sqrt{\frac{\log(18/\alpha)}{Cn'}} \bigm\vert \mathcal{D}^-_{n_-}\cup \mathcal{D}^+_{n_+}  \right\} \\
 \leq 18 \exp\left\{-\frac{Cn'\left( \varepsilon - \delta\right)^2}{4}\right\}~.
\end{multline}

Finally, we obtain the desired bound by taking the expectation on the last inequality \eqref{eq:1part} and combining it with Eq. \eqref{eq:2part} using the union bound.

\subsection{A nonasymptotic inequality for the testing threshold}\label{appaddproofs}

Let $\{X_{\varepsilon,1},\;\ldots,\; X_{\varepsilon,n_{\varepsilon}}\}$ with $\varepsilon \in \{-, +\}$, be two independent $\iid$ random samples, drawn from univariate probability distributions $F_{\varepsilon}$. Recall that the univariate two-sample linear rank statistic based on these samples is defined by
\begin{equation}\label{appeq:crit_emp}
  \hat{W}^{\phi}_{n_-,n_+}=\sum_{i=1}^{n_+}\phi\left( \frac{R(X_{+,i})}{n+1} \right)~,
\end{equation}
where  the ranks $R(X_{+,i}) = \sum_{\epsilon\in\{-,+\}}\sum_{j=1}^{n_{\epsilon}}\mathbb{I}\{s(X_{\epsilon, j}) \leq X_{+,i}  \}$, for all $i\leq n_+$
The proposed class of linear rank statistics is distribution-free under the null, hence allows for the exact computation of the testing threshold for any sample sizes.  Proposition \ref{appprop:rate_quantile} provides an upperbound  for the $(1-\alpha)$-quantile $q^{\phi}_{n_-,n_+}(\alpha)$ of the pushforward distribution of $\mathcal{L}^{\phi}_{n_-,n_+}$ by the mapping $w\mapsto (1/n)w-\int_0^1\phi(u)\dd u$. It proves to be of order $\mathcal{O}_{\mathbb{P}}(n^{-1/2})$ and only depending  on $\phi$, $n_+, \; n_-$ and $\alpha$.

\begin{proposition}\label{appprop:rate_quantile} Let $p\in (0,1)$ and $n\geq 1/p$.  Let the score-generating function $\phi(u)$ satisfy Assumption 1. Set $n_+ = \lfloor pn \rfloor$ and $n_- = \lceil (1-p)n \rceil = n - n_+$. Then, for any $\alpha\in (0,1)$, the $(1-\alpha)$-quantile  satisfies  a.s.:
\begin{equation}
q^{\phi}_{n_-,n_+}(\alpha) \leq \sqrt{\frac{\log(18/\alpha)}{Cn}}~,
\end{equation}
where 
$C=8^{-1}\min\left(p/\lVert \phi \rVert_{\infty}^2,  (p \lVert \phi' \rVert_{\infty}^2)^{-1}, ((1-p) \lVert \phi' \rVert_{\infty}^2)^{-1} \right) $.
\end{proposition}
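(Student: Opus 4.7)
The overall plan is to reduce the quantile bound to an exponential concentration inequality for the normalized two-sample rank statistic under $\mathcal{H}_0$, and then to invert that concentration bound. First, I would observe that under $\mathcal{H}_0$ (\emph{i.e.} $F_+=F_-$), the statistic $\hat{W}^{\phi}_{n_-,n_+}$ is distribution-free, so by the probability integral transform we may assume both samples are \iid~uniform on $[0,1]$. Equivalently, for any scoring function $s$ the two pushforward distributions $F_{s,+}$ and $F_{s,-}$ coincide, which by the formula for $W_\phi$ means that the natural population centering is
\begin{equation*}
W_\phi(s)=\mathbb{E}\left[(\phi\circ F_s)(s(\bX_+))\right]=\int_0^1 \phi(v)\,\dd v~.
\end{equation*}

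The main technical input is the exponential concentration inequality for the centered normalized rank statistic that appears as \eqref{appeq:bound1Drstat} in the proof of Theorem \ref{thm:typeII} above, namely
\begin{equation*}
\mathbb{P}\left\{\left|\frac{1}{n_+}\hat{W}^{\phi}_{n_-,n_+}-\int_0^1\phi(u)\,\dd u\right|\geq t\right\}\leq 18\exp(-Cnt^2)~,
\end{equation*}
with $C$ exactly as in the statement of the proposition. To establish this inequality, I would decompose the pooled empirical c.d.f.~as $\hat{F}_n=(n_+/n)\hat{F}_++(n_-/n)\hat{F}_-$, Taylor-expand $\phi(R(X_{+,i})/(n+1))$ to first order around $\phi(F(X_{+,i}))$, and split the resulting expression into three pieces: (i) an \iid~empirical mean of the bounded variables $\phi(F(X_{+,i}))$, controlled by Hoeffding's inequality and contributing the $p/\lVert\phi\rVert_{\infty}^2$ term in $C$; (ii) an empirical-process contribution from the positive sample with bounded-differences coefficient of order $\lVert\phi'\rVert_{\infty}/n_+$, yielding the $1/(p\lVert\phi'\rVert_{\infty}^2)$ term; and (iii) the symmetric contribution from the negative sample, yielding the $1/((1-p)\lVert\phi'\rVert_{\infty}^2)$ term. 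Allocating the deviation $t$ half-and-half between the centered statistic and its population analogue and then splitting once more among these three pieces yields the factor $1/8$ inside $C$, and the prefactor $18=3\cdot 6$ arises from applying a two-sided Hoeffding (or McDiarmid) bound to each of the three contributions and taking a union bound.

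The quantile bound is then immediate: setting $18\exp(-Cnt^2)=\alpha$ gives $t=\sqrt{\log(18/\alpha)/(Cn)}$, so by the definition of the $(1-\alpha)$-quantile of the (distribution-free) distribution $\mathcal{L}^\phi_{n_-,n_+}$ under $\mathcal{H}_0$ we conclude $q^{\phi}_{n_-,n_+}(\alpha)\leq \sqrt{\log(18/\alpha)/(Cn)}$.

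The main obstacle is proving the underlying concentration inequality with the \emph{explicit} matching constants. The delicate part is controlling the Taylor-linearization remainder carefully so that it does not corrupt the three rate-determining terms, and verifying that the bounded-differences coefficients over the positive and negative samples scale as claimed; Assumption \ref{hyp:phic2} (score-generating function of class $\mathcal{C}^2$ with bounded derivatives) is precisely what makes this linearization rigorous and keeps the remainder at lower order.
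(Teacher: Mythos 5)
Your overall strategy is the same as the paper's: reduce the quantile bound to the exponential concentration inequality \eqref{appeq:bound1Drstat} for $\frac{1}{n_+}\hat W^{\phi}_{n_-,n_+}-\int_0^1\phi$ under $\mathcal{H}_0$, then invert by setting $18\exp(-Cnt^2)=\alpha$; that inversion step is correct and is exactly how the paper concludes. Where you diverge is in how the linearized statistic is concentrated. The paper performs a second-order Taylor expansion of $\phi$ at $n\hat F_n(X_{+,i})/(n+1)$ around $F(X_{+,i})$, recognizes the first-order terms as two $U$-statistics, and applies the Hoeffding decomposition to extract three i.i.d.\ sums ($\hat W_\phi$, $\hat V^+_{n_+}$, $\hat V^-_{n_-}$) plus a remainder $\mathcal{R}_{n_-,n_+}$ containing degenerate $U$-statistics and the Taylor--Lagrange residual; the projections are handled by Hoeffding's inequality at threshold $t/4$ (whence the $1/8$ in $C$, since $2(t/4)^2=t^2/8$), and the remainder is handled separately by Hoeffding, a bound for degenerate one-sample $U$-statistics, a bound for degenerate two-sample $U$-statistics, and two applications of Dvoretzky--Kiefer--Wolfowitz to the squared sup-deviations of the empirical c.d.f.'s. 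Your proposal to use bounded differences on the empirical-process pieces is a plausible alternative, but your bookkeeping of the constants does not survive scrutiny: the prefactor $18$ is not ``$3\cdot 6$ from two-sided bounds on three pieces'' but $2+2+2+12$, with $12$ coming from the four tail bounds on the remainder, and the $1/8$ does not come from a ``half-and-half then three-way'' split of $t$ but from allocating $t/4$ to each of the three projection terms (and $t/4$ to the remainder). Since the proposition asserts these exact constants, your sketch as written would not reproduce them; moreover, you do not explain how the degenerate $U$-statistic contributions and the second-order Taylor residual are pushed below the Gaussian-rate terms, which is precisely the step the paper spends most of its effort on. You correctly identify this as the main obstacle, but the proposal leaves it unresolved.
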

\begin{proof} The proof relies on the concentration results established in \cite{CleLimVay21}, see Theorem 5 in particular, and builds upon the linearization technique exposed therein. Define by $F = pF_+ + (1-p)F_-$ the mixture $\cdf$ of the pooled sample and of empirical estimator $\widehat{F}_{n} (t)= (1/n)\sum_{\varepsilon \in \{ +,- \}}\sum_{i \leq n_{\varepsilon} }\mathbb{I}\{ X_{\varepsilon,i} \leq t\}$.  By considering $\phi(u)$ satisfying Assumption 1, writing its Taylor expansion of order $2$ evaluated at $ n\widehat{F}_n(X_{+,i} )/(n+1)$ around $\bF(X_{+,i})$ for $1\leq i\leq n_+$, and  summed over $i\leq n_+$, results in a \textit{a.s.} decomposition of the statistic Eq. \eqref{eq:crit_emp}. We refer to Eq. (B.3,4) in \cite{CleLimVay21} for the detailed arguments.

The  terms of the resulting expansion of order one are  composed of two $U$-statistics, for which the Hoeffding decomposition results in the linearization below: 
\begin{multline}\label{eq:linear}
  \frac{1}{n_+}\widehat{W}_{n_-,n_+}^{\phi} - W_{\phi} = \widehat{W}_{\phi}-  W_{\phi} +\frac{1}{n_+} \left(\widehat{V}_{n_+}^+ -\mathbb{E}\left[ \widehat{V}_{n_+}^+ \right]\right)
  +\frac{1}{n_+}\left( \widehat{V}_{n_-}^- -\mathbb{E}\left[  \widehat{V}_{n_-}^- \right]\right)\\
+\frac{1}{n_+} \mathcal{R}_{n_-,n_+}~,
\end{multline}

where:
\begin{eqnarray*}
W_{\phi}&=& \mathbb{E}[\left(\phi \circ \bF\right)(X_{+})]~, \\
\widehat{W}_{\phi}&=& \frac{1}{n_+} \sum_{i=1}^{n_+} \left(\phi \circ \bF\right)(X_{+,i})~, \\
\widehat{V}_{n_+}^+ &=& \frac{n_+-1}{n+1} \sum_{i=1}^{n_+} \int_{X_{+,i}}^{+\infty} (\phi' \circ \bF)(u) \dd F_+(u)~,\\
\widehat{V}_{n_-}^- &=& \frac{n_+}{n+1} \sum_{i=1}^{n_-} \int_{X_{-,i}}^{+\infty}( \phi' \circ \bF)(u) \dd F_+(u)~,
\end{eqnarray*}
and $ \mathcal{R}_{n_-,n_+}$ is the sum of the Taylor-Lagrange residual term $\widehat{T}_{n_-,n_+}$, and of the terms of  order at most $\mathcal{O}_{\mathbb{P}}(n^{-1})$ inherited from the (two) Hoeffding decompositions. Precisely, it inherits from the linear statistics of order $\mathcal{O}_{\mathbb{P}}(n^{-1})$ defined by $ \widehat{R}_{n_-,n_+} $, and both remainder terms being degenerate $U$-statistics. 
We detail hereafter the main steps for obtaining a nonasymptotic exponential deviation bound of the univariate rank statistic $(1/n_+)\widehat{W}_{n_-,n_+}^{\phi}$ based on Eq. \eqref{eq:linear}. Following \cite{CleLimVay21}, define  the (nonsymmetric) bounded kernels defined on $\mathbb{R}^2$ by:
\begin{eqnarray*}
       k (z,z') &=& \mathbb{I}\{z'\leq z\} (\phi' \circ \bF)(z)~.
\end{eqnarray*} 
Then  
\begin{equation*}
\mathcal{R}_{n_-,n_+} = \widehat{R}_{n_-,n_+} + \frac{n_+(n_+-1)}{n+1}  U_{n_+} (k)+\frac{n_+n_-}{n+1}  U_{n_-,n_+} (k)+  \widehat{T}_{n_-,n_+}~, 
\end{equation*}
where $U_{n_+}(k) $ is the one-sample degenerate $U$-statistic of order $2$ based on the positive sample with kernel $k$, $ U_{n_-, n_+}(k) $ 
is the two-sample degenerate $U$-statistic  of degree $(1,1)$ based on the two samples $\{X_{\varepsilon,1},\;\ldots,\; X_{\varepsilon,n_{\varepsilon}}\}$, with $\varepsilon \in \{-, +\}$, with kernel $k$. 

Noticing that 
\begin{equation*}
\vert	\mathcal{R}_{n_-,n_+}\vert \leq \vert \widehat{R}_{n_-,n_+}\vert +p^2n \vert U_{n_+}(k) \vert+ p(1-p)n\vert U_{n_-,n_+}(k) \vert + \vert \widehat{T}_{n_-,n_+}\vert~,
\end{equation*}
one can sequentially upperbounded the tail of each  term with threshold $t/16$, for any $t>0$,  in probability using: Hoeffding's classic exponential bound from \cite{Hoeffding63} with the union bound to $\widehat{R}_{n_-,n_+}$, Lemma 3 in \cite{NP87} applied to $U_{n_+} $, Lemma 27 in \cite{CleLimVay21} to $U_{n_+,n_-} $, and finally for $\widehat{T}_{n_-,n_+}$, one has:
\begin{multline*}
\frac{1}{n_+}\vert \widehat{T}_{n_-,n_+} \vert 
\leq  \Vert \phi''\Vert_{\infty} \left( \sup_{t \in\mathbb{R} }  \left(\widehat{F}_{n}(t) -\bF(t) \right)^2 
+  \frac{ 1}{(n+1)^2}\right)\\
\leq 3  p^2 \Vert \phi''\Vert_{\infty} \sup_{t \in\mathbb{R} }  \left(\widehat{F}_{n_+}(t) -F_+(t) \right)^2  \\
+ 3 (1-p)^2 \Vert \phi''\Vert_{\infty} \sup_{t \in\mathbb{R} }  \left(\widehat{F}_{n_-}(t) -F_-(t) \right)^2   + \frac{13\Vert \phi''\Vert_{\infty}}{n^2}~.
\end{multline*}
It remains to apply Dvoretzky–Kiefer–Wolfowitz inequality to each of the two first terms on the right hand side, while the third is negligeable $\wrt$ the others. 
This concludes to, for all  $nt \geq 512  \lVert \phi' \rVert_{\infty}^2/( p  \lVert \phi'' \rVert_{\infty} ) $:
\begin{equation}
\mathbb{P}\left\{ \vert \mathcal{R}_{n,m} \vert > \frac{t}{4}  \right\} \leq 12 \exp\left\{- \frac{Nt}{48\kappa_p \lVert \phi'' \rVert_{\infty}}\right\}~,
\end{equation}
and otherwise
\begin{equation}
  \mathbb{P}\left\{ \vert \mathcal{R}_{n_-,n_+} \vert > \frac{t}{4}   \right\} \leq 12 \exp\left\{- \frac{\alpha_p n^2t^2}{512 \lVert \phi' \rVert_{\infty}^2}\right\}~,
\end{equation}
where $\alpha_p = \min(p ,1-p)/(4(1-p))$, $\kappa_p = \max(p, 1-p)$.

It remains to apply Hoeffding exponential inequality to the other terms of the decomposition Eq. \eqref{eq:linear} with threshold $t/4$ as follows:

\begin{eqnarray*}
\mathbb{P}\left\{ \vert  	\widehat{W}_{\phi} -  W_{\phi} \vert >  \frac{t}{4}  \right\} &\leq & 2 \exp\left\{-\frac{pnt^2}{8\lVert \phi \rVert_{\infty}^2}\right\},\\
\mathbb{P}\left\{ \frac{1}{n_+} \left| \widehat{V}_{n_+}^+ -  \EE\left[\widehat{V}_{n_+}^+\right] \right| >  \frac{t}{4}   \right\} &\leq & 2 \exp\left\{-\frac{nt^2}{8p\lVert \phi' \rVert_{\infty}^2}\right\},\\
\mathbb{P}\left\{ \frac{1}{n_+} \left|  \widehat{V}_{n_-}^- - \EE\left[\widehat{V}_{n_-}^  -\right] \right| >  \frac{t}{4}   \right\} &\leq & 2\exp\left\{-\frac{nt^2}{8(1-p)\lVert \phi' \rVert_{\infty}^2}\right\}~.
\end{eqnarray*}

By virtue of the union bound, we obtain

\begin{equation}\label{appeq:bound1Drstat}
\mathbb{P}\left\{ \left\vert   \frac{1}{n_+} \widehat{W}_{n_-,n_+}^{\phi} -W_{\phi} \right\vert  >t \right\}\leq 18 \exp\{-Cnt^2\}~,
\end{equation}
where  $C=8^{-1}\min\left(p/\lVert \phi \rVert_{\infty}^2,(p \lVert \phi' \rVert_{\infty}^2)^{-1},((1-p) \lVert \phi' \rVert_{\infty}^2)^{-1} \right) $, concluding the proof. 

\end{proof}

\section{Additional Numerical Experiments}\label{appsec:expes}

This section details the technicalities related to the experiments exposed in the main corpus, as well as additional experiments on synthetic data. 

\paragraph{Experimental parameters.}  All results are shown with $95\%$ confidence interval based on $B\in \NN^*$ Monte-Carlo samplings. The number of random permutations for the benchmark tests is chosen so that the test is calibrated  $K_0 = 200$, 
the number of random permutations for our proposed procedure is fixed to $K_p \in \{10, 20,  50\}$. The significance level is chosen equal to  $\alpha = 0.05$. We consider the pooled sample size $N\in\{500, 1000, 2000\}$, with $n = 4N/5$ and $n' = N/5$, where the subsamples are balanced $n_-=n_+=n/2$, $n_-'=n_+'=n'/2$, and denote by $d = 2q = 2l $. We choose the RTB parameter $u_0\in \{0.85,  \; 0.90, \; 0.95\}$.

\paragraph{Probabilistic models.}
We first consider different types of independence according to the following models. Define $\bX = (X^1, X^2, \ldots, X^{q})$ and $\bY=(Y^1, Y^2, \ldots, Y^{l})$, the first two models sample $\bX$ and $\bY$ according to the multivariate Gaussian distribution, in the continuity of Example \ref{exgauss}.

\begin{enumerate}
  \item[](GL) $(\bX, \; \bY) \sim \mathcal{N}(e_d,(1/\sqrt{d})\times \Gamma_{\rho})$, where $e_{d}\in \mathbb{R}^d$ the null vector, $\text{Cov}(X^1,Y^k) = \rho$, for all $k\leq l$ and $\Gamma_{\rho,i,j} = \delta_{ij}$ otherwise, $d\in \{4,10, 26,50\}$ for $N=500$ and $d\in \{4,10\}$ for $N=1000$.
 \item[](GL+) Covariance matrix from model (GL) extended for higher dimensions with $\text{Cov}(X^u,Y^k) = \rho$, for all $k\leq l$ and a $u\leq q$ only, and with $d \in \{100,250, 500\}$, $N=500$.

\end{enumerate}
Also, for (GL), the range of the dependence parameter $\rho$ are chosen such that the resulting $\Gamma_{\rho}$ is positive definite to show directional dependency. 
The following data generation distributions model non-monotonic alternative hypothesis. The first subset of coordinates $X^u, Y^v$'s are drawn according to the models below, and $X^i, \; Y^j$, for all $i,j\geq u, v$ are independently drawn from the Univariate distribution on $[0,1]$ and are independent of the first coordinate. 

\begin{enumerate}
  \item[](M1) $X^1 = \rho \cos \Theta + \omega_1/4$, $Y^1 = \rho \sin \Theta + \omega_2/4$, with 
  $\rho \in \{1, 2, 3\}$,
  $\omega_i\sim \mathcal{N}(0,1)$, $i\in \{1,2\}$,  and $\Theta\sim\mathcal{U}([0,2\pi])$ all variables being independent, and with $d\in \{4, 10, 26\}$, $N\in \{500, 2000\}$.
  \item[](M1s) Sparse covariance matrix from model (M1)  extended for higher dimensions by generating the $X^u,  Y^v$'s, for $u,v\leq q/2, l/2$ according to (M1) and the $X^u,  Y^v$, for $u,v>q/2, l/2$ are drawn from the Univariate distribution on $[0,1]$, with $d\in \{100,250,500\}$, $N = 500$.
  \item[](M1d) Dense covariance matrix from model (M1)  extended for higher dimensions with by generating the $X^u,  Y^v$, for all coordinates $u,v\leq q, l$ according to (M1), with $d\in \{100,250,500\}$, $N = 500$. 
\end{enumerate}

Model (M1) was proposed for both the univariate and bivariate settings by \cite{BerrSam19} and further studied by \cite{AlbLMM22} and for very small sample sizes. 
We compare our results for models (M1s) and (M1d) to the benchmark tests to see the resistance to high dimension $d$.

  \begin{figure}[ht!]
    %
      \begin{tabular}{ccccc}
    \parbox{0.25\textwidth}{
    \includegraphics[scale=0.3]{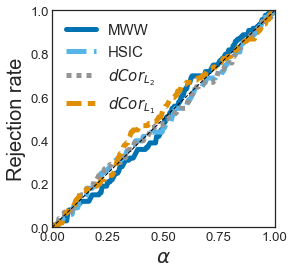}
    \subcaption{$\rho = 0.0, d=50$}
    } 
    \parbox{0.23\textwidth}{
    \includegraphics[scale=0.3]{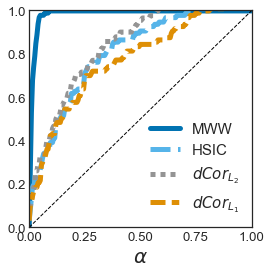}
    \subcaption{(M1s),\\ $\rho = 0.30, d=50$}
    \includegraphics[scale=0.3]{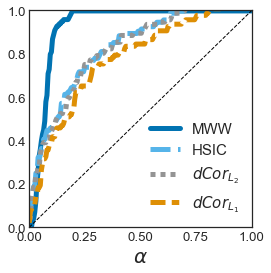}
    \subcaption{(M1d),\\ $\rho = 0.20, d=50$}
    }
    \parbox{0.23\textwidth}{
    \includegraphics[scale=0.3]{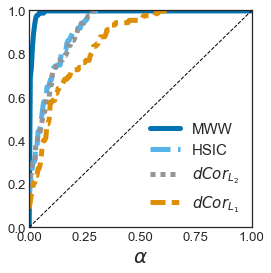}
    \subcaption{(M1s),\\ $\rho = 0.40, d=50$}
    \includegraphics[scale=0.3]{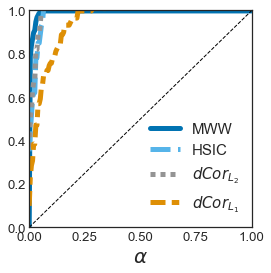}
    \subcaption{(M1d),\\ $\rho = 0.30, d=50$}
    }
    \parbox{0.23\textwidth}{
    \includegraphics[scale=0.3]{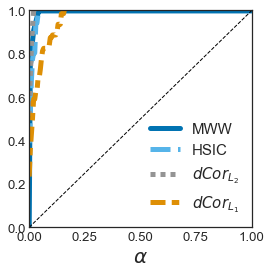}
    \subcaption{(M1s),\\ $\rho = 0.50, d=50$}
    \includegraphics[scale=0.3]{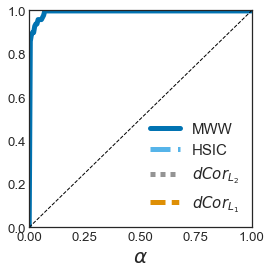}
    \subcaption{(M1d),\\ $\rho = 0.40, d=50$}
    }
  
    \end{tabular}
    \caption{Plots of the  rejection rate under $\cH_0$ (a) and $\cH_1$ (b-g) against the significance level $\alpha\in(0,1)$ for (M1s) top row, and (M1d) bottom row, with $\phi(u)=u$ (\texttt{rForest$_{MWW}$}), $\rho = 0.0$ (a)  $\rho \in (0.20, 0.50)$ (b-g). The parameters are fixed to  $N=500$, $d=50$, $K_p=10$, $K_0=200$, $B=100$ for all experiments.}
    \label{fig:expesM1sd50}
    \end{figure}
  
    \begin{figure}[ht!]
      %
        \begin{tabular}{ccccc}
      \parbox{0.25\textwidth}{
      \includegraphics[scale=0.3]{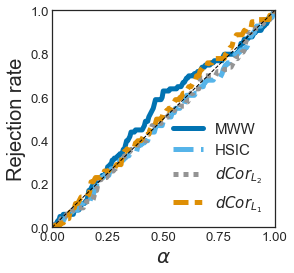}
      \subcaption{$\rho = 0.0, d=100$}
      } 
      \parbox{0.23\textwidth}{
      \includegraphics[scale=0.3]{pval_test_M2_HDsNd50050_1001200_tree11_30__scale_03071239.png}
      \subcaption{(M1s),\\ $\rho = 0.30, d=100$}
      \includegraphics[scale=0.3]{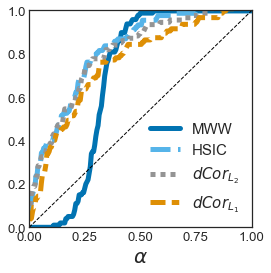}
      \subcaption{(M1d),\\ $\rho = 0.15, d=100$}
      }
      \parbox{0.23\textwidth}{
      \includegraphics[scale=0.3]{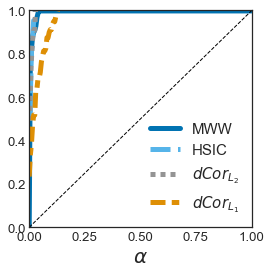}
      \subcaption{(M1s),\\ $\rho = 0.40, d=100$}
      \includegraphics[scale=0.3]{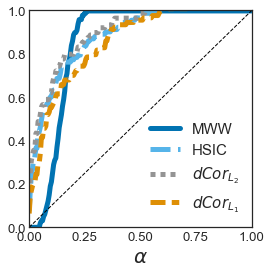}
      \subcaption{(M1d),\\ $\rho = 0.20, d=100$}
      }
      \parbox{0.23\textwidth}{
      \includegraphics[scale=0.3]{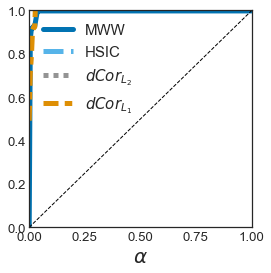}
      \subcaption{(M1s),\\ $\rho = 0.50, d=100$}
      \includegraphics[scale=0.3]{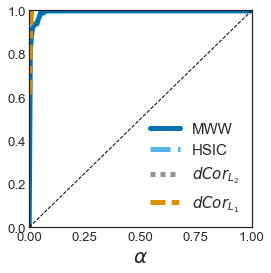}
      \subcaption{(M1d),\\ $\rho = 0.30, d=100$}
      }
    
      \end{tabular}
      \caption{Plots of the  rejection rate under $\cH_0$ (a) and $\cH_1$ (b-g) against the significance level $\alpha\in(0,1)$ for (M1s) top row, and (M1d) bottom row, with $\phi(u)=u$ (\texttt{rForest$_{MWW}$}), $\rho = 0.0$ (a)  $\rho \in (0.15, 0.50)$ (b-g). The parameters are fixed to  $N=500$, $d=100$, $K_p=10$, $K_0=200$, $B=100$ for all experiments.}
      \label{fig:expesM1sd100}
      \end{figure}

\begin{landscape}
\begin{table*}[ht!]
	\centering
	\resizebox{2\textwidth}{!}{	
		\begin{tabular}{|l||c|ccc|c|ccc|c|ccc|}
			\hline
            \textbf{Model (GL)}  &\multicolumn{4}{c|}{\textbf{$N=500$, $d = 4$}   } & \multicolumn{4}{c|}{\textbf{$N=500$, $d=10$}  }& \multicolumn{4}{c|}{\textbf{$N=500$, $d=26$}  }\\
            \hline
            \textbf{Rejection rate of the null} & \text{$\cH_0$} & \multicolumn{3}{c|}{\textbf{$\cH_1$}}& \text{$\cH_0$} & \multicolumn{3}{c|}{\textbf{$\cH_1$}}&\text{$\cH_0$} & \multicolumn{3}{c|}{\textbf{$\cH_1$}} \\
          \textbf{Method }&  $\rho=0.0$ & $\rho=0.1$  & $\rho=0.3$ & $\rho=0.6$ &   $\rho=0.0$ & $\rho=0.05$ & $\rho=0.1$ & $\rho=0.15$ &  $\rho=0.0$ & $\rho=0.02$ & $\rho=0.05$ & $\rho=0.07$\\
			\hline 
			\hline 
			\bf rForest$_{MWW}$ &0.04 $\pm$ 0.19  &\bf0.47 $\pm$  0.50  &\bf0.92 $\pm$ 0.27 & \bf1.00 $\pm$  0.00 & 0.05 $\pm$ 0.22&\bf0.85 $\pm$ 0.36 &\bf0.92 $\pm$ 0.27&\bf0.98 $\pm$ 0.14& 0.04 $\pm$ 0.19  &\bf 0.98 $\pm$ 0.14 &\bf 0.99 $\pm$ 0.10 &\bf1.00 $\pm$ 0.00\\
      rForest$_{95}$& 0.01 $\pm$ 0.10 &0.02 $\pm$ 0.14 & 0.08 $\pm$ 0.27 &0.17 $\pm$ 0.35 &0.01 $\pm$ 0.10&0.04 $\pm$ 0.19&0.17 $\pm$ 0.38&0.16 $\pm$ 0.37& 0.01 $\pm$ 0.10 &  0.29 $\pm$ 0.46 & 0.42 $\pm$ 0.59 & 0.39 $\pm$ 0.49 \\
      rForest$_{90}$ & 0.02 $\pm$ 0.14&0.10 $\pm$ 0.30 & 0.23 $\pm$ 0.42 & 0.49 $\pm$ 0.50 &0.01 $\pm$ 0.10&0.32 $\pm$ 0.47&0.37 $\pm$ 0.46&0.46  $\pm$  0.50 & 0.01 $\pm$ 0.10& 0.64 $\pm$ 0.48 & 0.60 $\pm$ 0.49 & 0.77 $\pm$ 0.42\\
      rForest$_{85}$ & 0.02 $\pm$ 0.14& 0.17 $\pm$ 0.38 & 0.35 $\pm$ 0.48 &0.71 $\pm$ 0.46 &0.01 $\pm$ 0.10&0.41 $\pm$ 0.49&0.52 $\pm$ 0.50& 0.63  $\pm$ 0.49 &0.02 $\pm$ 0.14&  0.79 $\pm$ 0.41 & 0.78 $\pm$ 0.42 & 0.87 $\pm$ 0.34 \\
            
			\hline
			HSIC& 0.06 $\pm$ 0.24 & 0.09 $\pm$ 0.29 & 0.06 $\pm$ 0.24 & 0.14 $\pm$ 0.35 & 0.06 $\pm$ 0.24 & 0.06 $\pm$ 0.24  & 0.09 $\pm$ 0.29 & 0.04 $\pm$ 0.19 & 0.06 $\pm$ 0.24 &0.03 $\pm$ 0.17 &0.06 $\pm$ 0.24 & 0.03 $\pm$ 0.17 \\
      dCor$_{L2}$ & 0.10 $\pm$ 0.30 & 0.06 $\pm$ 0.24 & 0.03 $\pm$ 0.17 & 0.12 $\pm$  0.33 &  0.07 $\pm$ 0.26 &0.03 $\pm$ 0.17 & 0.10 $\pm$ 0.30 & 0.11 $\pm$ 0.31 & 0.05 $\pm$ 0.22 &0.03 $\pm$ 0.17 &0.06 $\pm$ 0.24 &0.10 $\pm$ 0.30\\

      dCor$_{L1}$& 0.08 $\pm$ 0.27 & 0.06 $\pm$ 0.24 & 0.09 $\pm$ 0.29 & 0.15 $\pm$ 0.36 &0.05 $\pm$ 0.22& 0.08 $\pm$ 0.27&0.11 $\pm$ 0.31 & 0.09 $\pm$ 0.29& 0.04 $\pm$ 0.19 & 0.04 $\pm$ 0.20 & 0.04 $\pm$ 0.20 &0.0è $\pm$ 0.26 \\

			\hline
			\hline\hline
            \textbf{Model (M1)}  &\multicolumn{4}{c|}{\textbf{$N=500$, $d = 4$}   } & \multicolumn{4}{c|}{\textbf{$N=500$, $d=10$}  }& \multicolumn{4}{c|}{\textbf{$N=500$, $d=26$}  }  \\
            \hline
            \textbf{Rejection rate of the null} & \text{$\cH_0$} & \multicolumn{3}{c|}{\textbf{$\cH_1$}}& \text{$\cH_0$} & \multicolumn{3}{c|}{\textbf{$\cH_1$}}&\text{$\cH_0$} & \multicolumn{3}{c|}{\textbf{$\cH_1$}} \\
          \textbf{Method }&  $\rho=0.0$ & $\rho=1$ & $\rho=2$ & $\rho=3$ & $\rho=0.0$ & $\rho=1$ & $\rho=2$ & $\rho=3$  & $\rho=0.0$ & $\rho=1$ & $\rho=2$ & $\rho=3$ \\
			\hline 
			\hline 
			\bf rForest$_{MWW}$ & 0.04 $\pm$ 0.19  &\bf 0.78$\pm$ 0.42  &\bf 0.97$\pm$ 0.17  &\bf 0.99 $\pm$ 0.10& 0.04 $\pm$ 0.19 &\bf1.00 $\pm$ 0.00 &\bf1.00 $\pm$ 0.00&\bf1.00 $\pm$ 0.00& 0.04 $\pm$ 0.20 &\bf0.99 $\pm$ 0.10&\bf1.00 $\pm$ 0.00&\bf1.00 $\pm$ 0.00\\
       rForest$_{95}$ &  0.01 $\pm$ 0.10& 0.02$\pm$ 0.14 & 0.07 $\pm$ 0.26& 0.13$\pm$ 0.34 & 0.00 $\pm$ 0.00&0.16 $\pm$ 0.37 &0.70 $\pm$ 0.46&0.88 $\pm$ 0.33& 0.00  $\pm$ 0.00 &0.33 $\pm$ 0.47 &0.92 $\pm$ 0.27&0.99 $\pm$ 0.10 \\
       rForest$_{90}$  & 0.02 $\pm$ 0.14 & 0.16$\pm$ 0.37 &0.38 $\pm$ 0.47 & 0.52 $\pm$ 0.50 & 0.02 $\pm$ 0.14&0.67 $\pm$ 0.47 &0.98 $\pm$ 0.14 &\bf1.00 $\pm$ 0.00&   0.01 $\pm$ 0.10&0.80 $\pm$ 0.040&\bf1.00 $\pm$ 0.00&\bf1.00 $\pm$ 0.00\\
       rForest$_{85}$  & 0.00 $\pm$ 0.00 & 0.23 $\pm$ 0.42  &0.56 $\pm$ 0.50 &  0.71 $\pm$ 0.46 &  0.01 $\pm$ 0.10& 0.88 $\pm$ 0.33&1.00 $\pm$ 0.00 &\bf1.00 $\pm$ 0.00&  0.01 $\pm$ 0.10& 0.89$\pm$ 0.31&\bf1.00 $\pm$ 0.00&\bf1.00 $\pm$ 0.00\\
			\hline
			HSIC&  0.03 $\pm$ 0.17 & 0.22 $\pm$ 0.42 &0.60 $\pm$ 0.49 & 0.86 $\pm$ 0.35 & 0.05 $\pm$ 0.22 &  0.26 $\pm$ 0.44  &  0.74 $\pm$ 0.44&\bf1.0 $\pm$ 0.00& 0.04 $\pm$ 0.20 & 0.16 $\pm$ 0.37 &0.98 $\pm$ 0.14& \bf1.00 $\pm$ 0.00 \\
			dCor$_{L2}$& 0.06 $\pm$ 0.24 &0.20 $\pm$ 0.40 & 0.59 $\pm$ 0.50 & 0.83 $\pm$ 0.38 & 0.03 $\pm$ 0.17 & 0.26 $\pm$ 0.44&  0.75 $\pm$ 0.44 &\bf1.0 $\pm$ 0.00&0.03 $\pm$ 0.17 & 0.17 $\pm$ 0.38 &0.96 $\pm$ 0.20& \bf1.00 $\pm$ 0.00 \\
      dCor$_{L1}$ & 0.02 $\pm$ 0.14 & 0.18 $\pm$ 0.39& 0.49 $\pm$ 0.50 & 0.72 $\pm$ 0.45 & 0.05 $\pm$ 0.22 & 0.18 $\pm$ 0.39&  0.55 $\pm$ 0.50 &0.80 $\pm$ 0.40& 0.08 $\pm$ 0.27 & 0.12 $\pm$ 0.33 &0.79 $\pm$ 0.41& 0.97 $\pm$ 0.17\\
			\hline
			
		\end{tabular}
	}		
	\caption{Empirical rejection rates for testing $\cH_0$ of independence against $\cH_1$, for models (GL, M1) $\pm$ $95\%$ standard deviation at significance level $\alpha = 0.05$. Parameters: $\rho\in[0,0.6]$ (GL), $\rho\in\{0,1,2,3\}$ (M1), $d\in\{4,10, 26\}$, $K_p = 50$, $B=100$, $B_p=200$. Results in bold specify the best performance among all methods.}
	\label{tab:all500}
\end{table*}
\end{landscape}

\begin{landscape}
\begin{table*}[ht!]
	\renewcommand{\arraystretch}{1.1}
	\centering
	\resizebox{2\textwidth}{!}{	
		\begin{tabular}{|l||c|ccc|c|ccc|c|ccc|}
			\hline
            \textbf{Model (M1s, $N=500$)}  &\multicolumn{4}{c|}{\textbf{$d = 50$}   } & \multicolumn{4}{c|}{\textbf{$d=100$}} & \multicolumn{4}{c|}{\textbf{$d=250$}}  \\
              \hline
              \textbf{Rejection rate of the null} & \text{$\cH_0$} & \multicolumn{3}{c|}{\textbf{$\cH_1$}}& \text{$\cH_0$} & \multicolumn{3}{c|}{\textbf{$\cH_1$}}&\text{$\cH_0$} & \multicolumn{3}{c|}{\textbf{$\cH_1$}} \\
          \textbf{Method}	& $\rho=0.0$  & $\rho=0.30$ & $\rho=0.40$ & $\rho=0.50$ & $\rho=0.0$  & $\rho=0.30$ & $\rho=0.40$ & $\rho=0.50$& $\rho=0.0$  & $\rho=0.20$ & $\rho=0.30$ & $\rho=0.40$ \\
			\hline 

			\hline 
			\bf rForest$_{MWW}$  & 0.03 $\pm$ 0.17 & \bf 0.94 $\pm$ 0.24& \bf0.96 $\pm$ 0.20& \bf0.97 $\pm$ 0.17& 0.03 $\pm$ 0.17   & \bf 0.82 $\pm$ 0.39&  \bf0.96 $\pm$ 0.20 &  \bf 1.00 $\pm$ 0.00 & 0.07 $\pm$ 0.26   & 0.11 $\pm$ 0.31 &  \bf0.90 $\pm$ 0.30 & 0.98 $\pm$ 0.14\\ 
      rForest$_{95}$ & 0.01 $\pm$ 0.10 &0.02 $\pm$ 0.14&0.03 $\pm$ 0.17 &0.74 $\pm$ 0.44 &0.01 $\pm$ 0.10 &0.00 $\pm$ 0.00& 0.05 $\pm$ 0.22 &  0.73 $\pm$ 0.45 &0.01 $\pm$ 0.10 &0.00 $\pm$ 0.00&0.00 $\pm$ 0.00&0.00 $\pm$ 0.00\\
      rForest$_{90}$ &  0.01 $\pm$ 0.10 &0.32 $\pm$ 0.47 &0.60 $\pm$ 0.49 & 0.95 $\pm$ 0.22 &0.04 $\pm$ 0.19 &0.00 $\pm$ 0.00  & 0.60 $\pm$ 0.49 &  0.98 $\pm$ 0.14 & 0.02 $\pm$ 0.14 &0.00 $\pm$ 0.00&0.00 $\pm$ 0.00&0.06 $\pm$ 0.24\\
      rForest$_{85}$ & 0.01 $\pm$ 0.10   & 0.71 $\pm$ 0.46 &0.89 $\pm$ 0.31& \bf 0.97 $\pm$ 0.17&0.03 $\pm$ 0.17   & 0.00 $\pm$ 0.00 & 0.82 $\pm$ 0.39&  0.99 $\pm$ 0.10& 0.03 $\pm$ 0.17 &0.00 $\pm$ 0.00&0.06 $\pm$ 0.24&0.24 $\pm$ 0.42\\

			\hline
      HSIC & 0.05 $\pm$ 0.22 &  0.27 $\pm$ 0.45 &0.41 $\pm$ 0.49&0.72 $\pm$ 0.45& 0.03 $\pm$ 0.17 & 0.46 $\pm$ 0.50 &0.80 $\pm$ 0.40 &0.92 $\pm$ 0.27& 0.04 $\pm$ 0.19 &0.23 $\pm$ 0.42&0.74 $\pm$ 0.44& \bf1.00 $\pm$ 0.00\\

      dCor$_{L2}$ & 0.06 $\pm$ 0.24 &    0.28 $\pm$ 0.45 & 0.39 $\pm$ 0.49&0.73 $\pm$ 0.45& 0.05 $\pm$ 0.22 &  0.41 $\pm$ 0.49  &0.80 $\pm$ 0.40 &0.93 $\pm$ 0.26&0.04 $\pm$ 0.19 & \bf  0.24 $\pm$ 0.43&0.74 $\pm$ 0.44 & \bf1.00 $\pm$ 0.00\\
      dCor$_{L1}$ & 0.04 $\pm$ 0.19 &  0.21 $\pm$ 0.41 & 0.27 $\pm$ 0.45&0.59 $\pm$ 0.49& 0.02 $\pm$ 0.14 &  0.35 $\pm$ 0.48 & 0.58 $\pm$ 0.50&0.78 $\pm$ 0.42& 0.03 $\pm$ 0.17 & 0.22 $\pm$ 0.42 &0.58 $\pm$ 0.50&0.93 $\pm$ 0.27\\
      \hline
      \hline
            \textbf{Model (M1d, $N=500$)}  &\multicolumn{4}{c|}{\textbf{$d = 50$}   } & \multicolumn{4}{c|}{\textbf{$d=100$}} & \multicolumn{4}{c|}{\textbf{$d=250$}}  \\
              \hline
              \textbf{Rejection rate of the null} & \text{$\cH_0$} & \multicolumn{3}{c|}{\textbf{$\cH_1$}}& \text{$\cH_0$} & \multicolumn{3}{c|}{\textbf{$\cH_1$}}&\text{$\cH_0$} & \multicolumn{3}{c|}{\textbf{$\cH_1$}} \\
          \textbf{Method}	& $\rho=0.0$  & $\rho=0.20$ & $\rho=0.30$ & $\rho=0.40$ & $\rho=0.0$ & $\rho=0.15$ & $\rho=0.20$ & $\rho=0.30$ & $\rho=0.0$ &$\rho=0.20$ & $\rho=0.30$ & $\rho=0.40$ \\
			\hline 

			\hline 
			\bf rForest$_{MWW}$ & 0.03 $\pm$ 0.17 & \bf 0.37 $\pm$ 0.49&\bf  0.99$\pm$ 0.10& 0.96 $\pm$ 0.20 &0.03 $\pm$ 0.17 & 0.00 $\pm$ 0.00& 0.02 $\pm$ 0.14&\bf  0.99 $\pm$ 0.10 &0.07 $\pm$ 0.26   &0.00 $\pm$ 0.00& 0.97 $\pm$ 0.17 &\bf  1.00 $\pm$ 0.00 \\ 
      rForest$_{95}$ & 0.01 $\pm$ 0.10& 0.00 $\pm$ 0.00 &  0.10 $\pm$  0.31& 0.03 $\pm$ 0.17  &0.01 $\pm$ 0.10&0.00 $\pm$ 0.00 &0.00 $\pm$ 0.00& 0.02 $\pm$ 0.14&0.01 $\pm$ 0.10&0.00 $\pm$ 0.00& 0.00 $\pm$ 0.00  & 0.85 $\pm$ 0.36 \\
      rForest$_{90}$ &0.01 $\pm$ 0.10 &0.00 $\pm$ 0.00 & 0.65 $\pm$ 0.48 &  0.60 $\pm$ 0.49 &0.04 $\pm$ 0.19 &0.00 $\pm$ 0.00&0.00 $\pm$ 0.00&  0.32 $\pm$ 0.47&0.02 $\pm$ 0.14&0.00 $\pm$ 0.00&  0.00 $\pm$ 0.00 &0.98 $\pm$ 0.14 \\
      rForest$_{85}$ & 0.02 $\pm$ 0.14 & 0.00 $\pm$ 0.00 & 0.92 $\pm$ 0.39&  0.89$\pm$ 0.31 & 0.03 $\pm$ 0.17 &0.00 $\pm$ 0.00&0.00 $\pm$ 0.00&0.71 $\pm$ 0.46&0.03 $\pm$ 0.17 &0.00 $\pm$ 0.00&  0.06 $\pm$ 0.24 &  0.99$ \pm$ 0.10 \\

			\hline
      HSIC & 0.05 $\pm$ 0.22 & 0.32 $\pm$ 0.47 & 0.93 $\pm$ 0.26 &\bf 0.99 $\pm$ 0.10&0.03 $\pm$ 0.17 &0.27$\pm$ 0.45&0.43 $\pm$ 0.50&0.93 $\pm$ 0.26&0.04 $\pm$ 0.19 &0.74$\pm$ 0.44& \bf 1.00 $\pm$ 0.00& \bf 1.00 $\pm$ 0.00\\
      dCor$_{L2}$ &0.06 $\pm$ 0.24 &0.33 $\pm$ 0.47 &  0.95 $\pm$ 0.22 & \bf 0.99 $\pm$ 0.10& 0.05 $\pm$ 0.2 &\bf 0.29$\pm$ 0.46& \bf 0.47 $\pm$ 0.50&0.95 $\pm$ 0.22&0.04 $\pm$ 0.19 & 0.74$\pm$ 0.44&\bf1.00 $\pm$ 0.00& \bf 1.00 $\pm$ 0.00\\
      dCor$_{L1}$ &0.04 $\pm$ 0.19 & 0.19 $\pm$ 0.39 &  0.80 $\pm$ 0.40 &0.88 $\pm$ 0.33 & 0.02 $\pm$ 0.14 &0.15 $\pm$ 0.36& 0.32 $\pm$ 0.47& 0.80 $\pm$ 0.40&0.03 $\pm$ 0.17 & 0.68 $\pm$ 0.47&\bf1.00 $\pm$ 0.00& \bf 1.00 $\pm$ 0.00\\
      \hline
		\end{tabular}
	 }		
	\caption{Empirical rejection rates for  model (M1s, M1d) $\pm$ $95\%$ standard deviation  at significance level $\alpha = 0.05$. Parameters: $\rho\in\{0.0,0.1,0.2,0.3\}$, $d\in\{50,100,250\}$, $K_p\in\{10, 50\}$, $K_0=200$,  $B=100$. Results in bold specify the best performance among all methods.}
	\label{tab:M2_HDs}
    \label{tab:M2_HDd}
\end{table*}
\end{landscape}

\end{document}